%
%
%
%

\documentclass[11pt,oneside,reqno]{amsart}
\usepackage{enumerate}
\usepackage{mathrsfs}
\usepackage{eucal}
\usepackage{geometry}
\usepackage{romanbar}
\usepackage[colorlinks=true,allcolors=blue]{hyperref}

\numberwithin{equation}{section}

\theoremstyle{definition}
\newtheorem{Definition}{Definition}[section]
\newtheorem{Example}[Definition]{Example}
\newtheorem{Remark}[Definition]{Remark}
\theoremstyle{plain}
\newtheorem{Theorem}[Definition]{Theorem}
\newtheorem{Proposition}[Definition]{Proposition}
\newtheorem{Corollary}[Definition]{Corollary}
\newtheorem{Lemma}[Definition]{Lemma}

\newcommand{\al}{\alpha}
\newcommand{\be}{\beta}
\newcommand{\ep}{\varepsilon}

\newcommand{\Z}{\mathbb{Z}}

\newcommand{\C}{\mathbb{C}}


\newcommand{\Fsl}{\mathfrak{sl}}
\newcommand{\Fsp}{\mathfrak{sp}}
\newcommand{\Fso}{\mathfrak{so}}
\newcommand{\Fg}{\mathfrak{g}}
\newcommand{\Fh}{\mathfrak{h}}
\newcommand{\Fn}{\mathfrak{n}}

\DeclareMathOperator{\ad}{ad}
\DeclareMathOperator{\diag}{diag}

\newcommand{\p}{\partial}

\renewcommand{\tilde}{\widetilde}
\newcommand{\II}{\textup{\Romanbar{II}}}

\title[Exact Solutions to the Klein--Gordon Equation via Reduction Algebras]{Exact Solutions to the Klein--Gordon Equation\\via Reduction Algebras}
\author{Jonas T. Hartwig \and Lillian Ryan Uhl \and Dwight Anderson Williams II}
\date{\today}

\address{Department of Mathematics, Iowa State University, Ames, IA-50011, USA}
\email{jth@iastate.edu}
\urladdr{http://jthartwig.net}
\thanks{J.T.H. is and L.R.U. were supported by the Army Research Office grant W911NF-24-1-0058.}
\address{Department of Mathematics, Iowa State University, Ames, IA-50011, USA}
\email{lillianu@iastate.edu}
\urladdr{}
\address{Department of Mathematics, Morgan State University, Baltimore, MD-21251, US}
\email{dwight@mathdwight.com}
\urladdr{https://mathdwight.com}

\begin{document}

\begin{abstract}
Reduction algebras (also known as generalized Mickelsson algebras, Zhelobenko algebras, or transvector algebras) are well-studied associative algebras appearing in the representation theory of Lie algebras. In the 1990s, Zhelobenko noted that reduction algebras have a connection to field equations from physics, whereas Howe's study of dual pairs in the 1980s signifes that the link originated even earlier. In this paper, we revisit the simplest case of a scalar field, working in arbitrary spacetime dimension $n\ge 3$, and in arbitrary flat metric $\eta_{ab}$. We recall that the field equations specialize to the homogeneous Laplace equation and the (massless) Klein--Gordon equation for appropriate metrics; correspondingly, there is a representation of the Lie algebra $\Fsl_2$ (technically, $\Fsp_2$) by differential operators and an associated reduction algebra. The reduction algebra contains raising operators, which provide a means to construct bosonic states $|a_1a_2\cdots a_\ell\rangle$ as certain degree $\ell$ polynomials solving the relevant field equation. We give an explicit formula for these solutions and prove that they span the polynomial part of the solution space. We also give a complete presentation of the reduction algebra and compute the inner product between the bosonic states in terms of the rational dynamical R-matrix.
\end{abstract}

\maketitle

\tableofcontents

\section{Introduction}

The Klein--Gordon equation $\square \phi = m^2 \phi$ is the equation of motion for linear scalar field theory, where $\phi$ is a scalar field of mass $m$ and $\square$ is the d'Alembert operator. Thus the Klein--Gordon equation plays a fundamental role in physics with a history \cite{kemmerParticleAspectMeson1939} of mathematical interest, especially in finding numerical solutions as done in \cite{shakeriNumericalSolutionKlein2007}. In this paper we consider the algebraic aspects of the massless Klein--Gordon equation, specifically the connections to reduction algebras \cite{Mic1973,van1976harisch,Zhe1989,KhoOgi2008,khoroshkinDiagonalReductionAlgebras2010,debieHarmonicTransvectorAlgebra2017,hartwigDiagonalReductionAlgebra2022c,eelbodeOrthogonalBranchingProblem2022,hartwigGhostCenterRepresentations2023,eelbodeHermitianRefinementSymplectic2024,hartwigSymplecticDifferentialReduction2025,mudrovMickelssonAlgebrasHasse2025} and dynamical quantum groups \cite{Fel1995,etingof1999exchange,arnaudon1997universal,Kho2004,kalmykovGeometricCategoricalApproaches2021,kalmykovCategoricalApproachDynamical2022}. More precisely, we study structures on the space $\mathscr{F}^+$ of all polynomials $\phi=\phi(x^1,x^2,\ldots,x^n)$ in $n$ commuting variables $x^i$ that satisfy the equation
\begin{equation}\label{eq:KG}
\sum_{a,b=1}^n \eta^{ab} \partial_a\partial_b \phi = 0,
\end{equation}
where $\partial_a=\frac{\partial}{\partial x^a}$, and $\eta^{ab}$ are the entries of (the inverse of) an arbitrary non-degenerate symmetric real matrix $(\eta_{ab})$ called the flat metric. Two interesting and commonly studied cases are when when the metric is the identity matrix (equivalently, when the $\eta^{ab}$ are given by the Kronecker delta $\delta^{ab}$) and when the metric is the diagonal matrix $\diag(1,-1,-1,\ldots,-1)$. 
The equation \eqref{eq:KG} then becomes the Laplace equation $\Delta \phi=0$ and Klein--Gordon equation $\square \phi=0$, respectively.

It is well-documented \cite[for example]{binegarUnitarizationSingularRepresentation1991} that the operator $\partial_a\partial^a:=\sum_{a,b=1}^n \eta^{ab}\partial_a\partial_b$ is invariant under the special orthogonal group preserving the metric. The infinitesimal action, that is, the action of the corresponding Lie algebra\footnote{All algebras within are taken to be defined over the complex numbers.} may be expressed as the adjoint action of the differential operators $b_{ij}=x_i \partial_j - x_j\partial_i$ where $x_i=\sum_j\eta_{ij}x^j$. What is perhaps less known is that the operators $b_{ij}$ generate a subalgebra of a much larger algebra $Z^\circ$ that acts \emph{irreducibly} (when $n\ge 3$) on the solution space $\mathscr{F}^+$ to the equation $\partial_a\partial^a\phi=0$. In particular, the algebra $Z^\circ$ contains ``raising operators'' $\tilde x^i$ which take a degree $d$ solution $\phi$ of \eqref{eq:KG} to a degree $d+1$ solution $\tilde x^i\cdot\phi$. We use these operators to obtain explicit closed form solutions, denoted $|a_1a_2\cdots a_\ell\rangle$, in Section \ref{sec:explicit-solutions}.
In Theorem \ref{thm:explicit-solutions}, we provide an explicit formula for these solutions and give some examples.
We prove irreducibility of the action of $Z^\circ$ in our setting in Section \ref{sec:irreducibility}. In particular, any solution can be generated from the constant solution $1$ using the raising operators and taking linear combinations. Thus our explicit solutions span the solution space.
These facts were stated in \cite{Zhe1996} in the case of the Laplace operator, and we provide a proof in the present paper for a general metric. The Laplace operator and its relation to the orthogonal group is related to the Howe dual pair $(\Fsp_2,\Fso_n)$ of Lie algebras inside the symplectic Lie algebra $\Fsp_{2n}$; see \cite{How1985}.
The algebra $Z^\circ$ is a subalgebra of the reduction algebra $Z=Z(W'_\eta(2n),\Fsp_2)$ where $W'_\eta(2n)=\C(H)\otimes_{\C[H]} W_\eta(2n)$ and $W_\eta(2n)$ is the $n$:th Weyl algebra extended by central generators $\eta_{ab}$ (see Section \ref{sec:presentation} for details). We give a complete presentation of the algebra $Z$ in Section \ref{sec:presentation}.


Lastly, the symplectic Fourier transform on the Weyl algebra induces an involution on the reduction algebra, and it can be used to define a bilinear form $\langle\cdot|\cdot\rangle$ on the solution space. In Section \ref{sec:inner-products} we show that this bilinear form can be explicitly computed in terms of the dynamical $R$-matrix appearing when writing the reduction algebra relations in a compact form.

Merging aspects of \cite{How1985}\cite{Zhe1996}, we briefly review the connection between the Klein--Gordon equation and reduction algebras in more detail. This story can be generalized to many other equations and algebras. It begins with the observation that the Weyl algebra $W(2n)$ on $2n$ generators $x^a$, $\partial_a$ ($a=1,2,\ldots,n$) can be defined as the tensor algebra on $\C^{2n}$ modulo an ideal defined in terms of a non-degenerate anti-symmetric bilinear form. Thus $W(2n)$ carries a canonical action of the rank $n$ symplectic group by algebra automorphisms. When differentiated, this provides an action of the Lie algebra $\Fsp_{2n}$ on $W(2n)$ by derivations. Since any derivation of the Weyl algebra is inner, this gives a Lie algebra homomorphism from $\Fsp_{2n}$ to the Weyl algebra viewed as a Lie algebra with the commutator. This is known as the \emph{oscillator representation} of $\Fsp_{2n}$, see \cite{havlicekCanonicalRealizationsLie1976}\cite[Lemma 4.6.9(iv)]{dixmierEnvelopingAlgebras1996}\cite[§1.59]{FraSciSor2000}. Passing to the universal enveloping algebra we get the vertical arrow in the following diagram of associative algebras and algebra homomorphisms:
\begin{equation}\label{eq:diagram}
\begin{alignedat}{2}
U(\Fsp_2)\to U(&\Fsp_{2n})\leftarrow U(\Fso_n) \\ 
&\downarrow  \\ 
W&(2n)
\end{alignedat}
\end{equation}
The Lie algebra $\Fsp_{2n}$ contains a Howe dual pair $(\Fsp_2,\Fso_n)$. That is, there are Lie subalgebras $\Fsp_2\subset\Fsp_{2n}$ and $\Fso_n\subset\Fsp_{2n}$ which are mutual centralizers. These provide the horizontal maps in the diagram \eqref{eq:diagram}. Explicitly, the left one sends $\left(\begin{smallmatrix}a&b\\c&-a\end{smallmatrix}\right)\in\Fsp_2=\Fsl_2$ to $\left(\begin{smallmatrix}a\mathbf{1}_n&b\mathbf{1}_n\\c\mathbf{1}_n&-a\mathbf{1}_n\end{smallmatrix}\right)\in\Fsp_{2n}$ and the right map sends $x=-x^T\in\Fso_n$ to $\left(\begin{smallmatrix}x&0\\0&x\end{smallmatrix}\right)\in\Fsp_{2n}$. The composition $U(\Fsp_2)\to U(\Fsp_{2n})\to W(2n)$ can be slighly modified so that the positive simple root vector $e$ of $\Fsp_2$ is sent to the operator $\p_a\p^a$. 

Consequently, the space $\mathscr{F}$ polynomials in $n$ variables with complex coefficients, which is naturally a module over the Weyl algebra, can also be viewed as a module over $U(\Fsp_2)$. Furthermore, the space $\mathscr{F}^+$ of all solutions $\phi$ to the equation $\p_a\p^a\phi=0$ becomes identified with the subspace of primitive (that is, highest weight) vectors with respect to $\Fsp_2$. The normalizer $N$ of the left ideal $I=W(2n)\p_a\p^a$ in $W(2n)$ acts on the solutions space, and so does $N/I$. A localizaion $Z$ of $N/I$ is called the \emph{reduction algebra} associated to the algebras $U(\Fsp_2)$, $W(2n)$, and the map $U(\Fsp_2)\to W(2n)$. The algebra $Z^\circ$ is a subalgebra of $Z$ that has well-defined action on $\mathscr{F}^+$ (see Section \ref{sec:irreducibility} for the precise definition).

\section{Background}

In this section we review the details of the procedure by which one obtains a representation of the Lie algebra $\Fsl_2$ (technically, $\Fsp_2$) by differential operators which maps one of the nilpotent elements to the generalized Laplace-d'Alembert operator $\square=\p_a\p^a=\sum_{a,b}\eta^{ab}\p_a\p_b$ where $\eta^{ab}$ can be any constant non-degenerate symmetric matrix. 
We then recall the extremal projector for $\Fsl_2$ and its application to reduction algebras.

\subsection{Oscillator Representations}
The oscillator representation \cite[Ch. 4, §6 (Lemma 4.6.9(iv))]{dixmierEnvelopingAlgebras1996}\cite[§1.59]{FraSciSor2000} of the symplctic Lie algebra $\Fsp_{2m}$ is a homomorphism from the enveloping algebra $U(\Fsp_{2m})$ to the Weyl algebra $W(2m)$ on $2m$ generators. Since the $3$-dimensional symplectic Lie algebra $\Fsp_2$ is isomorphic to $\Fsl_2$, we use the latter notation since it more commonly seen.

Let $\Fsl_2$ be the 3-dimensional complex simple Lie algebra of traceless $2\times 2$-matrices. It has a $\C$-vector space basis
\begin{equation}
e=\left(\begin{smallmatrix}0&1\\0&0\end{smallmatrix}\right),\quad
f=\left(\begin{smallmatrix}0&0\\1&0\end{smallmatrix}\right),\quad
h=\left(\begin{smallmatrix}1&0\\0&-1\end{smallmatrix}\right),
\end{equation}
which satisfy
\begin{equation}\label{eq:sl2-rels}
[h,e]=2e,\quad [h,f]=-2f,\quad [e,f]=h,
\end{equation}
where $[a,b]=ab-ba$ denotes the commutator. Let $W(2)$ be the first Weyl algebra, defined as the associative unital $\C$-algebra with two generators $\p$ and $x$ subject to the single relation $\p x - x\p = 1$. The \emph{oscillator representation} of $\Fsl_2$ (when viewed as $\Fsp_2$) is the homomorphism of associative unital $\C$-algebras 
\[
\varphi_1:U(\Fsl_2)\to W(2)
\]
given by
\begin{equation}\label{eq:oscillator-sp2}
\varphi_1(e)=\frac{i}{2}\p^2,\quad \varphi_1(f)=\frac{i}{2}x^2,\quad \varphi_1(h)=-\frac{1}{2}(x\p+\p x).
\end{equation}
We have chosen normalization constants for $e$ and $f$ so that matrix transposition (a $\C$-linear anti-automorphism of $\Fsl_2$, extended to $U(\Fsl_2)$) is represented by the symplectic Fourier transform $\ast$ on the Weyl algebra side. By definition, $\ast:W(2)\to W(2)$, $a\mapsto a^\ast$, is $\C$-linear, $(ab)^\ast=b^\ast a^\ast$, and $x^\ast=\p$, $\p^\ast=x$. 
Note also that in \eqref{eq:oscillator-sp2} we have flipped the role of $e$ and $f$ compared to the conventions in \cite{FraSciSor2000}. The reason is that we will be using the extremal projector for $\Fsl_2$ which traditionally projects onto highest rather than lowest weight spaces. Therefore we want the positive root vector $e$ to be mapped to the relevant partial differential operator. Here, $\p^2$ is simply the $1$-dimensional Laplace operator.

To get to higher dimensional Laplace operators, one can use the iterated comultiplication in the Hopf algebra $U(\Fsl_2)$. For this, fix a positive integer $n$, which will be the dimension of spacetime. The diagonal map $\Fsl_2\to \Fsl_2\times\cdots\times\Fsl_2$ gives rise to an algebra homomorphism
\[\Delta^n : U(\Fsl_2)\to U(\Fsl_2)^{\otimes n}\]
sending $a\in \Fsl_2$ to the sum $\sum_{j=1}^n a^{(j)}$ where $a^{(j)}=1\otimes\cdots\otimes 1\otimes a\otimes 1\otimes\cdots\otimes 1$ where $a$ is in the $j$:th position.

Composing $\Delta_n$ with $(\varphi_1)^{\otimes n}$, and noting that $W(2)^{\otimes n}\cong W(2n)$, the $n$:th Weyl algebra with generators $x^a, \p_a$ ($a=1,2,\ldots,n$) and relations 
\begin{equation}
x^{a}x^{b} - x^{b}x^{a} = \p_a\p_b - \p_b\p_a = \p_ax^b - x^b\p_a - \delta_a^b =0
 \end{equation}
where $\delta_a^b$ denotes the Kronecker delta, we obtain the following known result (see e.g. \cite{Zhe1996}). 

\begin{Lemma}\label{lem:sl2-to-W2n}
There is a homomorphism of associative unital $\C$-algebras:
\begin{equation}
\varphi_n:U(\Fsl_2) \to W(2n)
\end{equation}
given by
\begin{equation}\label{eq:sl2-to-W2n}
e\mapsto \frac{i}{2}\sum_{a=1}^n\p_a\p_a,\quad f\mapsto \frac{i}{2}\sum_{a=1}^n x^ax^a,\quad h\mapsto -\frac{1}{2}\sum_{a=1}^n(x^a\p_a+\p_a x^a).
\end{equation}
\end{Lemma}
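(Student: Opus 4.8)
The plan is to take the three ingredients already assembled in the discussion preceding the statement and compose them, then verify \eqref{eq:sl2-to-W2n} by tracking $e,f,h$ through the composite. Those ingredients are: the algebra homomorphism $\Delta^n\colon U(\Fsl_2)\to U(\Fsl_2)^{\otimes n}$, $a\mapsto\sum_{j=1}^n a^{(j)}$, arising from the diagonal Lie algebra map together with the universal property of $U(\Fsl_2)$; the oscillator homomorphism $\varphi_1\colon U(\Fsl_2)\to W(2)$ of \eqref{eq:oscillator-sp2}, whose $n$-th tensor power $(\varphi_1)^{\otimes n}\colon U(\Fsl_2)^{\otimes n}\to W(2)^{\otimes n}$ is again an algebra homomorphism; and the algebra isomorphism $W(2)^{\otimes n}\cong W(2n)$ sending the pair $(x,\p)$ in the $j$-th tensor slot to $(x^j,\p_j)$. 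One defines $\varphi_n$ to be the composite of these three arrows.

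Next I would compute the images of the standard generators. Applying $\Delta^n$ to $e$ gives $\sum_{j=1}^n e^{(j)}$; applying $(\varphi_1)^{\otimes n}$ turns the $j$-th summand into the element of $W(2)^{\otimes n}$ carrying $\frac{i}{2}\p^2$ in slot $j$ and $1$ elsewhere; and the isomorphism $W(2)^{\otimes n}\cong W(2n)$ identifies this element with $\frac{i}{2}\p_j\p_j$. Summing over $j$ yields $\varphi_n(e)=\frac{i}{2}\sum_{a=1}^n\p_a\p_a$, and the identical bookkeeping for $f$ and $h$ produces $\frac{i}{2}\sum_a x^ax^a$ and $-\frac{1}{2}\sum_a(x^a\p_a+\p_a x^a)$ respectively. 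These are precisely the formulas in \eqref{eq:sl2-to-W2n}, so $\varphi_n$ is the asserted homomorphism.

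I do not expect a genuine obstacle here: the statement is essentially a bookkeeping exercise once the three arrows are in place, and each of them is standard. The only point meriting a line of justification is the isomorphism $W(2)^{\otimes n}\cong W(2n)$, which follows by comparing presentations — the only relations linking distinct tensor factors are the commutation relations $[x^a,x^b]=[\p_a,\p_b]=[\p_a,x^b]-\delta_a^b=0$ for $a\ne b$. For a reader preferring a self-contained argument, one can instead verify the relations \eqref{eq:sl2-rels} directly for the proposed operators in $W(2n)$ using $[\p_a,x^b]=\delta_a^b$; this is a short bracket computation, with, for instance, $[\varphi_n(e),\varphi_n(f)]$ reducing after the obvious cancellations to $-\frac{1}{2}\sum_a(x^a\p_a+\p_a x^a)=\varphi_n(h)$.
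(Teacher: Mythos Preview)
Your proposal is correct and follows exactly the approach the paper itself takes: the paper does not give a separate formal proof of this lemma but rather presents it as the outcome of composing $\Delta^n$, $(\varphi_1)^{\otimes n}$, and the identification $W(2)^{\otimes n}\cong W(2n)$, precisely as you describe. Your optional direct verification of the $\Fsl_2$ relations is also in the spirit of the paper, which carries out just such a computation in the proof of the more general Proposition~\ref{eq:eta-oscillator}.
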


In Section \ref{sec:massless-KG} we will generalize this to arbitrary metric.

\subsection{Reduction Algebras for \texorpdfstring{$\Fsl_2$}{sl(2)}}

Let $\Fg$ be a complex reductive Lie algebra with triangular decomposition $\Fg=\Fn_-\oplus\Fh\oplus\Fn_+$, and $A$ be an associative unital $\C$-algebra with an algebra homomorphism $\varphi:U(\Fg)\to A$. Let $I_+=A\varphi(\Fn_+)$ be the left ideal in $A$ generated by the image of the positive nilpotent part $\Fn_+$ of $\Fg$. Let $N=\{a\in A\mid I_+a\subset I_+\}$ be the normalizer of $I_+$ in $A$, by definition the largest subalgebra of $A$ in which $I_+$ is contained as a two-sided ideal. The \emph{reduction algebra} $Z(A,\Fg)$ associated to $\Fg$ (with its triangular decomposition), $A$, and the map $\varphi$, is defined to be $N/I_+$.

If $V$ is a left $A$-module, it becomes a $U(\Fg)$-module via $\varphi$ and we put $V^+=V^{\Fn_+}=\{v\in V\mid \varphi(a)\cdot v=0\forall a\in\Fn_+\}$. Equivalently, $V^+$ is the set of vectors annihilated by the ideal $I_+$. Consequently $V^+$ carries a natural action of $Z(A,\Fg)$.

It is difficult in general to find elements of $N$ by hand. However, we have the following fact. Let $I_-$ be the right ideal in $A$ generated by $\varphi(\Fn_-)$ and put $\II=I_-+I_+$.

\begin{Proposition}
The map 
\begin{equation}
Q:Z(A,\Fg)\to A/\II,\quad n+I_+\mapsto n+\II
\end{equation}
is injective. Furthermore, if $\varphi(h_\be)+m$ is invertible in $A$ for every coroot $h_\be\in\Fh$ of $\Fg$ and every integer $m$, then $Q$ is surjective. The inverse of $Q$ is given by 
\begin{equation}
a+\II \mapsto P(a+I_+),\quad\forall a\in A
\end{equation}
wher $P=P_\Fg$ is the extremal projector for $\Fg$.
\end{Proposition}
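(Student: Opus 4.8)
\emph{Strategy.} I would produce the inverse of $Q$ explicitly from the extremal projector $P=P_\Fg$ and verify the two composite identities, the left-inverse property corresponding to injectivity and the right-inverse property to surjectivity. The organizing principle is that $Z(A,\Fg)=N/I_+$ coincides, as a vector space, with the space $M^{\Fn_+}$ of $\Fn_+$-invariants of the left $A$-module $M:=A/I_+$, that $A/\II$ coincides with the space $M/\varphi(\Fn_-)M$ of $\Fn_-$-coinvariants, and that $\varphi(P)$ realizes the projection of $M$ onto $M^{\Fn_+}$. Concretely the steps are: (1) identify $N/I_+=M^{\Fn_+}$ and $A/\II=M/\varphi(\Fn_-)M$; (2) define $R\colon A/\II\to M$ by $R(a+\II)=\varphi(P)\cdot(a+I_+)$; (3) show $R$ is well defined with image in $M^{\Fn_+}$; (4) check $R\circ Q=\operatorname{id}$ and $Q\circ R=\operatorname{id}$.

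\emph{The projector and the role of the hypothesis.} Recall that $P=P_\Fg$ lies in a completion $\widehat{\tilde U}$ of the localization $\tilde U$ of $U(\Fg)$ obtained by inverting all $h_\be+m$ ($h_\be$ a coroot, $m\in\Z$), and has the properties (E1) $\Fn_+\cdot P=0=P\cdot\Fn_-$; (E2) $P^2=P$; (E3) $P-1\in\Fn_-\widehat{\tilde U}\cap\widehat{\tilde U}\Fn_+$; for $\Fg=\Fsl_2$, the case of this paper, $P=1+\sum_{k\ge1}\gamma_k(h)f^ke^k$ with each $\gamma_k$ rational in $h$ and poles among $-2,\dots,-(k+1)$. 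The hypothesis that $\varphi(h_\be)+m$ is invertible in $A$ is exactly what lets $\varphi$ extend to an algebra map $\tilde\varphi\colon\tilde U\to A$, so the coefficients $\varphi(\gamma_k(h))$ are defined; and since $\ad\varphi(\Fn_+)$ acts locally nilpotently on $A$ (true throughout this paper, $\ad\varphi(\Fg)$ being locally finite on the Weyl algebra) one has, modulo $I_+$, $\varphi(x_1)\cdots\varphi(x_k)\,a\equiv\ad\varphi(x_1)\cdots\ad\varphi(x_k)(a)$, which vanishes for $k\gg0$; so the defining series of $P$ acts on $M$ elementwise as a finite sum, giving a well-defined $\varphi(P)\in\operatorname{End}(M)$.

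\emph{Verifications.} Since $I_+\subseteq\II$, $Q$ is well defined. For $b\in A$ one has $b\in N\iff I_+b\subseteq I_+\iff\varphi(\Fn_+)b\subseteq I_+\iff\varphi(\Fn_+)(b+I_+)=0$ in $M$; hence $N/I_+=M^{\Fn_+}$, while $A/\II=A/(\varphi(\Fn_-)A+I_+)=M/\varphi(\Fn_-)M$. Then $R$ is well defined: modulo $I_+$ an element of $\II$ lies in $\varphi(\Fn_-)A$, and $\varphi(P)\varphi(\Fn_-)=\varphi(P\Fn_-)=0$ by (E1). Its image lies in $M^{\Fn_+}$ because $\varphi(\Fn_+)\varphi(P)=\varphi(\Fn_+P)=0$ by (E1). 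For $R\circ Q=\operatorname{id}$: on $M^{\Fn_+}$ every correction term $\varphi(\gamma_k(h))\varphi(f)^k\varphi(e)^k$ in the series for $\varphi(P)$ annihilates the space (its rightmost factor $\varphi(e)^k$ already does), so $\varphi(P)$ restricts to the identity on $M^{\Fn_+}$ and $R(Q(n+I_+))=R(n+\II)=n+I_+$. For $Q\circ R=\operatorname{id}$: given $a\in A$, the inclusion $P-1\in\Fn_-\widehat{\tilde U}$ from (E3) together with $\tilde\varphi(\tilde U)\subseteq A$ forces $(\varphi(P)-1)(a+I_+)\in\varphi(\Fn_-)M$, i.e.\ $\varphi(P)(a+I_+)\equiv a+I_+$ modulo $\varphi(\Fn_-)M$, which is the statement $Q(R(a+\II))=a+\II$.

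\emph{Main difficulty.} The one non-formal point is giving $\varphi(P)$ a rigorous meaning: one must check that the invertibility hypothesis is precisely enough to evaluate every denominator occurring in $P$, and that local nilpotence of $\ad\varphi(\Fn_+)$ on $A$ makes the infinite extremal-projector series act on $M=A/I_+$ elementwise as a finite sum (equivalently, that $\varphi(\Fn_+)$ acts locally nilpotently on $M$). Once that is in place, the two identifications and the two composite identities are purely formal consequences of (E1)--(E3). Note that the left-inverse identity $R\circ Q=\operatorname{id}$, hence injectivity of $Q$, needs only that $\varphi(P)$ fixes $\Fn_+$-invariants --- which for $\Fg=\Fsl_2$ holds because each correction term $\gamma_k(h)f^ke^k$ of $P$ has $e^k$ on the right --- whereas the right-inverse identity genuinely uses the invertibility of $\varphi(h_\be)+m$, in order to have $\varphi(P-1)$ land inside $\varphi(\Fn_-)\cdot A$.
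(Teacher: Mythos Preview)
The paper does not prove this proposition in-text; its ``proof'' is a one-line citation to \cite{KhoOgi2008}. Your argument is exactly the standard one found there: identify $N/I_+$ with the $\Fn_+$-invariants of the left module $M=A/I_+$, identify $A/\II$ with the $\Fn_-$-coinvariants of $M$, and use the extremal projector to split the canonical map. Under the invertibility hypothesis your verification of both composite identities is correct and complete, and there is nothing further to compare.

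One small caveat. You attempt to decouple injectivity from the hypothesis by observing that each correction term of $P$ ends in a power of $e$, so that ``$\varphi(P)$ fixes $\Fn_+$-invariants regardless of the coefficients.'' Strictly speaking this does not work: your left inverse $R$ is built from $\varphi(P)$, and $\varphi(P)$ is not a well-defined operator on $M$ unless every $\varphi(h_\be)+m$ is invertible---one cannot assign a value to $\varphi(\gamma_k(h))\cdot 0$ when $\varphi(\gamma_k(h))$ does not exist in $A$. The unconditional statement $N\cap\II\subseteq I_+$ needs a separate PBW/weight argument (for $A=U(\Fsl_2)$, for instance, writing elements in the basis $f^i p(h) e^j$ and checking that an $\Fn_+$-invariant coset with zero image in $M/\Fn_-M$ must already lie in $I_+$). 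Since the paper itself defers everything to the reference this is not a gap relative to what you were asked to reproduce, but it is worth knowing that the projector argument alone does not give injectivity without the localization.
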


\begin{proof} See, e.g., \cite{KhoOgi2008}.
\end{proof}

\section{Klein--Gordon Reduction Algebra}

\subsection{Oscillator Representation in Generic Metric}
\label{sec:massless-KG}
In this section we generalize the homomorphism from Lemma \ref{lem:sl2-to-W2n}. In \eqref{eq:sl2-to-W2n}, we notice that we are summing over indices on the same level. This signals that we should introduce a metric $\eta_{ab}$. By metric we mean a constant non-degenerate symmetric $n\times n$-matrix. To emphasize that the metric is completely generic, we extend the $n$:th Weyl algebra $W(2n)$ by adjoining a set of $2n^2$ central generators $\eta_{ab}$ and $\eta^{ab}$ subject to relations
\begin{equation}\label{eq:eta-relations}
\eta_{ab}=\eta_{ba},\qquad
\sum_{b=1}^n \eta_{ab}\eta^{bc}=\delta_a^c
\end{equation}
and write $W_\eta(2n)$ for this extended algebra. Here $\delta_a^c$ is the Kronecker delta. There is an isomorphism of associative algebras $W_\eta(2n)=\C[\eta]\otimes W(2n)$, where $\C[\eta]$ is the commutative algebra with generators $\eta_{ab}, \eta^{ab}$, subject to \eqref{eq:eta-relations}. The anti-automorphism $\ast$ we had on $W(2)$ induces a $\C$-linear anti-automorphism of $W_\eta(2n)$ satisfying
\begin{equation}\label{eq:ast-on-W-eta}
(x^a)^\ast = \partial_a,\quad (\partial_a)^\ast=x^a,\quad (\eta_{ab})^\ast=\eta^{ab}
\end{equation}

From now on we employ a \textbf{summation convention} that repeated indices, one subscript and one superscript, are understood to be summed over across the index set $\{1,2,\ldots,n\}$.
We introduce the elements $x_a$ and $\p^a$ of $W_\eta(2n)$ as lowered and raised versions of $x^a$ and $\p_a$, respectively:
\begin{equation}
x_a := \eta_{ab}x^b = \sum_{b=1}^n \eta_{ab}x^b,\qquad 
\p^a := \eta^{ab}\p_b = \sum_{b=1}^n \eta^{ab}\p_b.
\end{equation}
We can recover $x^a$ (respectively $\p_a$) from $x_a$ (respectively $\p^a$) through $x^a=\eta^{ab}x_b$ (respectively $\p_a=\eta_{ab}\p^b$), using \eqref{eq:eta-relations}. Commutators involving these have interesting behavior:
\begin{equation}
[\p^a, x^b] = \eta^{ab}, \qquad [\p_a, x_b] = \eta_{ab},
\end{equation}
where $[z,w]=zw-wz$ for $z,w\in W_\eta(2n)$. (This provides an alternative starting point for defining $W_\eta(2n)$.)

The main point of this section is the following fact:
\begin{Proposition}\label{eq:eta-oscillator}
There is a homomorphism of associative unital $\C$-algebras:
\[
\varphi_{n,\eta}:U(\Fsl_2) \to W_\eta(2n)
\]
given by $e\mapsto E$, $f\mapsto F$, $h\mapsto H$, where
\begin{equation}
E=\frac{i}{2}\p_a\p^a,\quad
F=\frac{i}{2}x_ax^a,\quad
H=-\frac{1}{2}(x^a\p_a+\p_ax^a).
\end{equation}
Here $\p_a\p^a = \sum_{a,b} \eta^{ab}\p_a\p_b$, which becomes the Laplace or d'Alembert operator when $\eta^{ab}$ are specialized to the entries of the identity matrix or diagonal matrix $\diag(1,-1,\ldots,-1)$, respectively.
\end{Proposition}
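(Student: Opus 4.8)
The plan is to invoke the universal property of the enveloping algebra: since $U(\Fsl_2)$ is the quotient of the free associative $\C$-algebra on the symbols $e,f,h$ by the two-sided ideal generated by the relations \eqref{eq:sl2-rels}, giving an algebra homomorphism $U(\Fsl_2)\to W_\eta(2n)$ amounts to exhibiting elements $E,F,H\in W_\eta(2n)$ with $[H,E]=2E$, $[H,F]=-2F$, and $[E,F]=H$. Thus the whole proof reduces to verifying these three identities for the operators in the statement. The only inputs are that $\eta_{ab},\eta^{ab}$ are central, the Weyl relations in the forms $[\p_a,x^b]=\delta_a^b$, $[\p^a,x_b]=\delta^a_b$, $[\p^a,x^b]=\eta^{ab}$, $[\p_a,x_b]=\eta_{ab}$, and the contraction \eqref{eq:eta-relations}.

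First I would record two normalizations that make the bookkeeping transparent. Using $\sum_a[\p_a,x^a]=n$, one rewrites $H=-\tfrac12(x^a\p_a+\p_ax^a)=-x^a\p_a-\tfrac n2$, and using \eqref{eq:eta-relations} one checks $x_a\p^a=x^a\p_a$, so that $H$ is, up to an additive scalar, the Euler degree operator $x^a\p_a$. Then $[x^c\p_c,\p_a]=-\p_a$ and hence $[x^c\p_c,\p^a]=-\p^a$, so $[x^c\p_c,\p_a\p^a]=-2\,\p_a\p^a$; multiplying by $-\tfrac i2$ gives $[H,E]=i\,\p_a\p^a=2E$.

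For $[H,F]=-2F$ I would apply the $\C$-linear anti-automorphism $\ast$ of \eqref{eq:ast-on-W-eta} to the previous identity. Since $\ast$ sends $\p_a\mapsto x^a$ and $\eta^{ab}\mapsto\eta_{ab}$, a short computation gives $E^\ast=F$ and $H^\ast=H$, so applying $\ast$ to $[H,E]=2E$ yields $[F,H]=2F$, i.e. $[H,F]=-2F$. For the remaining relation I would compute $[\p^a,x_bx^b]=2x^a$ and $[\p_a,x_bx^b]=2x_a$, whence $[\p_a\p^a,x_bx^b]=2\p_ax^a+2x_a\p^a=4x^a\p_a+2n$; multiplying by $-\tfrac14$ gives $[E,F]=-x^a\p_a-\tfrac n2$, which equals $H$ by the normalization recorded above.

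There is no serious obstacle here; the only thing to watch is keeping the raised and lowered indices and the symmetry $\eta_{ab}=\eta_{ba}$ straight in the contractions, e.g. that $\eta^{ab}x_b=x^a$ and $\sum_a\eta_{ac}\eta^{ab}=\delta_c^b$. (One could alternatively try to deduce the proposition from Lemma \ref{lem:sl2-to-W2n} by a base change ``diagonalizing'' the metric, writing $\eta_{ab}=\sum_k S_{ka}S_{kb}$; but since the $\eta_{ab}$ here are formal central variables this requires passing to an extension of $\C[\eta]$ in which such a square root $S$ exists, which is more trouble than the direct verification.)
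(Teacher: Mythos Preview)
Your proposal is correct and follows essentially the same approach as the paper: verify the three $\Fsl_2$ relations directly, using the symplectic Fourier transform $\ast$ to deduce $[H,F]=-2F$ from $[H,E]=2E$. The only cosmetic difference is that you first rewrite $H=-x^a\p_a-\tfrac n2$ and argue via the Euler-operator commutation $[x^c\p_c,\p_a]=-\p_a$, whereas the paper keeps $H$ in anticommutator form and expands via Leibniz; the underlying computation is the same.
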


\begin{proof}
    For completeness, we provide a direct proof of this. We must show that the elements $E,F,H$ satisfy the same relations \eqref{eq:sl2-rels} as $e,f,h$.
    We use $[a,b]=ab-ba$ and $\{a,b\}=ab+ba$. The Leibniz rule implies
    \begin{align*}
        [H,E]&=[-\frac{1}{2}\{x^a, \p_a\}, \frac{i}{2}\p_b\p^b] = -\frac{i}{4}\{[x^a, \p_b\p^b], \p_a\} \\
        &= \frac{i}{4}\{\delta_b^a\p^b + \eta^{ab}\p_b, \p_a\} = \frac{i}{2}\{\p^b, \p_b\} 
        = 2E.
    \end{align*}
    Applying the symplectic Fourier transform \eqref{eq:ast-on-W-eta} to both sides of $-[H,E]=-2E$ we obtain $[H,F]=-2F$.
    Lastly,
    \begin{align*}
        [E,F]&=[\frac{i}{2}\p_a \p^a, \frac{i}{2}x_b x^b] = -\frac{1}{4}(\p_a[\p^a, x_b]x^b + [\p_a, x_b]x^b\p^a + x_b[\p_a, x^b]\p^a + \p_a x_b[\p^a, x^a]) \\
        &= -\frac{1}{4}(\p_a \delta^a_b x^b + \eta_{a, b}x^b\p^a + x_b\delta_a^b\p^a + \p_a x_b \eta^{a, b}) 
        = -\frac{1}{2}\{x^a, \p_a\} = H
    \end{align*}
\end{proof}

The equation $\p_a\p^a \phi = 0$ now has meaning for $\phi$ in the $\eta$-extended polynomial ring \begin{equation}
\mathscr{F}=\C_\eta[x^1,x^2,\ldots,x^n]=\C[\eta]\otimes\C[x^1,x^2,\ldots,x^n].
\end{equation}
This is the natural space that the extended Weyl algebra $W_\eta(2n)$ (and hence $U(\Fsl_2)$, via Proposition \ref{eq:eta-oscillator}) is acting on.

\subsection{Presentation by Generators and Relations}\label{sec:presentation}

To compute relations in the reduction algebra we consider the localized double coset space equipped with the diamond product \cite{KhoOgi2008}. 

In more detail, recall that $E=\varphi_n^\eta(e)$, $F=\varphi_n^\eta(f)$, $H=\varphi_n^\eta(h)$.
Let $A=W'_\eta(2n)=\C(H)\otimes_{\C[H]} W_\eta(2n)$. Let $I_+ = A\cdot E$ be the left ideal in $A$ generated by $E$. Similarly, let $I_-=F\cdot A$ be the right ideal generated by $F$. Let $N=\{n\in A\mid I_+\cdot n\subset I_+\}$ be the normalizer of $I_+$ in $A$ and define the reduction algebra $Z$ to be $N/I_+$.
Let $\II=I_-+I_+$, and consider the double coset space $A/\II$. For $w\in A$, we put $\bar w=w+\II\in A/\II$.
As is known \cite{KhoOgi2008}, the canonical map $Z\to A/\II$, $n+I_+\mapsto n+\II$ is bijective, and the product $\diamond$ on $A/\II$ that makes this an algebra isomorphism is given by
\begin{equation}
    \bar w\diamond \bar z = \overline{wPz}
\end{equation}
where $P$ is the extremal projector\cite{AshSmiTol1971,asherovaProjectionOperatorsSimple1973,tolstoy2005fortieth,Tol2011} for $\Fsl_2$, and $wPz$ should be interpreted using adjoint actions: \begin{equation}
wPz = \sum_{k=0}^\infty \frac{1}{k!}  (\ad F)^k(w)\frac{1}{\psi_k(H)} (\ad E)^k(z),
\end{equation}
and $\psi_0(H)=1$, $\psi_1(H)=\frac{1}{H}$, and more generally
\begin{equation}
\psi_k(H) = (H+2-2k)(H+3-2k)\cdots (H+1+k-2k).
\end{equation}
(Again, this is a shifted version of the usual expression, because we have moved it to the right of the $F$ power.)
Note that $wPz\in A$ since $\ad E$ acts locally nilpotently on $A$.

We equip $W(2n)$ with the symplectic Fourier transform $\ast$, defined as the unique $\C$-algebra isomorphism $W_\eta(2n)^{\mathrm{op}}\to W_\eta(2n)$ defined by
\begin{equation}
(x^a)^\ast = \p_a,\qquad (\p_a)^\ast=x^a,\qquad (\eta_{ab})^\ast=\eta^{ab}.
\end{equation}
Note that $\ast$ preserves the denominator set, and therefore induces a well-defined operation on the localized algebra $A$. Furthermore, $P^\ast = P$, and $E^\ast = F$, therefore $\II^\ast=\II$. Thus $\ast$ descends to a well-defined involutive $R$-ring anti-automorphism on $A/\II$.

\begin{Lemma}\label{lem:KG-lemma1}
The following identities hold in $A$:
\begin{equation}
[E,x^a] = i\p^a,\qquad [\p_a,F]=ix_a.
\end{equation}
\begin{equation}
x^aH=(H+1)x^a,\qquad H \p_a =\p_a(H+1).
\end{equation}
\end{Lemma}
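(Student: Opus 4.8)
The plan is to verify all four identities by direct computation in $A = W'_\eta(2n)$, using the defining commutators $[\p^a,x^b]=\eta^{ab}$, $[\p_a,x_b]=\eta_{ab}$, $[\p_a,x^b]=\delta_a^b$ together with the summation convention. Since these are equalities of elements of the Weyl algebra (no passage to the quotient is needed), the only tools required are the Leibniz rule $[ab,c]=a[b,c]+[a,c]b$ and its mirror, plus the explicit formulas $E=\tfrac{i}{2}\p_a\p^a$, $F=\tfrac{i}{2}x_ax^a$, $H=-\tfrac12\{x^a,\p_a\}$.

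\textbf{First identity.} I would compute $[E,x^a]=\tfrac{i}{2}[\p_b\p^b,x^a]=\tfrac{i}{2}\bigl(\p_b[\p^b,x^a]+[\p_b,x^a]\p^b\bigr)=\tfrac{i}{2}\bigl(\p_b\eta^{ba}+\delta_b^a\p^b\bigr)=\tfrac{i}{2}(\p^a+\p^a)=i\p^a$, where I used $\p_b\eta^{ba}=\eta^{ab}\p_b=\p^a$ and $\delta_b^a\p^b=\p^a$. The second identity $[\p_a,F]=ix_a$ follows either by an identical computation — $[\p_a,\tfrac{i}{2}x_bx^b]=\tfrac{i}{2}\bigl(x_b[\p_a,x^b]+[\p_a,x_b]x^b\bigr)=\tfrac{i}{2}(x_a+x_a)=ix_a$ — or, more elegantly, by applying the symplectic Fourier anti-automorphism $\ast$ to the first identity, since $E^\ast=F$, $(x^a)^\ast=\p_a$, $(\p^a)^\ast=x_a$, and $[u,v]^\ast = [v^\ast,u^\ast] = -[u^\ast,v^\ast]$; applying $\ast$ to $[E,x^a]=i\p^a$ gives $-[F,\p_a]=ix_a$, i.e. $[\p_a,F]=ix_a$.

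\textbf{Last two identities.} For $x^aH=(H+1)x^a$ I would show the equivalent statement $[H,x^a]=x^a$: using $H=-\tfrac12(x^b\p_b+\p_bx^b)$, the Leibniz rule gives $[H,x^a]=-\tfrac12\bigl(x^b[\p_b,x^a]+[\p_b,x^a]x^b\bigr)=-\tfrac12(x^a+x^a)=-x^a$. Wait — this yields $[H,x^a]=-x^a$, hence $Hx^a=x^a(H-1)$, which rearranges to $x^aH=(H+1)x^a$, matching the claim. Similarly $[H,\p_a]=-\tfrac12\bigl(x^b[\p_b,\p_a]+[\p_b,x^b]\p_a+\cdots\bigr)$; since $[\p_b,\p_a]=0$ the only surviving term comes from $[x^b\p_b+\p_bx^b,\p_a]$, namely $-[x^b,\p_a]\p_b-\p_b[x^b,\p_a]$ appropriately, giving $[H,\p_a]=\p_a$, so $H\p_a=\p_a(H+1)$ as claimed.

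\textbf{Main obstacle.} There is no genuine obstacle here — the lemma is a bookkeeping exercise. The only points demanding care are (i) the sign conventions when pushing commutators through the anti-automorphism $\ast$, and (ii) tracking the summation convention so that a lowered-index $\eta_{ab}$ or $\delta_a^b$ contracts correctly to reproduce a raised or lowered operator; in particular one must confirm that after localizing at $\C[H]$ the element $x^a$ still normalizes the denominator set, which is immediate from $x^a\,r(H)=r(H+1)\,x^a$ for any rational function $r$, a consequence of the third identity itself. I would present the proof as the $[E,x^a]$ computation in full, then invoke $\ast$ for the second identity, then the two one-line commutator computations for the weight-shift identities.
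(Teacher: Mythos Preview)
Your proposal is correct and follows essentially the same route as the paper: compute $[E,x^a]$ directly via the Leibniz rule, obtain $[\p_a,F]$ by applying the symplectic Fourier anti-automorphism $\ast$, and handle the two weight-shift identities by a short commutator computation (the paper phrases this last step as ``$H$ is an additive constant minus the Euler vector field, or direct calculation,'' which is exactly what you do). The only cosmetic points are that your $[H,\p_a]$ computation is not fully written out and the mid-proof ``Wait'' should be cleaned up, but the argument itself is sound.
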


\begin{proof}
$[E,x^a] = \frac{i}{2}\eta^{bc}[\p_b\p_c,x^a] = \frac{i}{2}\eta^{bc}([\p_b,x^a]\p_c+\p_b[\p_c,x^a])=\frac{i}{2}\eta^{bc}(\delta_b^a\p_c+\p_b\delta_c^a)
=i\p^a$. Then apply the involution $\ast$. The last two follow from that $H$ is an additive constant minus the Euler vector field (or direct calculation).
\end{proof}

\begin{Lemma}\label{lem:KG-lemma2}
The following identities hold in the double coset space $A/\II$:
\begin{align} 
\bar x^a \diamond \bar w &= x^a w+\II,\qquad\forall w\in W(2n), \\
\bar w \diamond \bar \p_a &= w\p_a+\II,\qquad\forall w\in W(2n), 
\end{align}
\end{Lemma}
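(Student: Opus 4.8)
The plan is to compute the diamond product $\bar x^a \diamond \bar w$ directly from the formula $\bar w_1 \diamond \bar w_2 = \overline{w_1 P w_2}$ and show that all terms in the extremal-projector expansion beyond the $k=0$ term land in $\II$. Recall
\[
x^a P w = \sum_{k=0}^\infty \frac{1}{k!} (\ad F)^k(x^a) \frac{1}{\psi_k(H)} (\ad E)^k(w).
\]
First I would observe that $(\ad E)^k(w)$ can always be written as a left $A$-multiple of $E$ for $k \ge 1$: indeed, $(\ad E)^k(w) = \sum_j (\text{stuff})\cdot E$ is not literally true, but $(\ad E)(w) = Ew - wE$, and more to the point, since $E \in I_+$ and $I_+$ is a left ideal, it suffices to push the leftmost $\ad F$'s and the $H$-denominator past things. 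The cleaner route: for $k \ge 1$, the term $\frac{1}{k!}(\ad F)^k(x^a)\frac{1}{\psi_k(H)}(\ad E)^k(w)$ is a product in which the rightmost factor $(\ad E)^k(w)$, for $k\ge 1$, lies in the left ideal $I_+ = AE$ — because $(\ad E)(u) = Eu - uE \in I_+$ for any $u$, and iterating keeps us in $I_+$ since $I_+$ is closed under left multiplication by $A$ and $(\ad E)(I_+)\subset I_+$ (as $E\cdot I_+ \subset I_+$ and $I_+ \cdot E \subset AEE \subset I_+$). Wait — I need $(\ad E)^k(w)\in I_+$, and $(\ad E)(w)=Ew-wE$; the first summand is in $I_+$, the second is $wE\in I_+$ too, so $(\ad E)(w)\in I_+$, hence inductively $(\ad E)^k(w)\in I_+$ for $k\ge 1$. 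But the factor to its left, $(\ad F)^k(x^a)\frac{1}{\psi_k(H)}$, multiplied on the left of an element of $I_+$, stays in $I_+$. Therefore every $k\ge1$ term lies in $I_+\subset\II$, and only the $k=0$ term, namely $x^a w$, survives in $A/\II$. This gives the first identity.

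For the second identity, $\bar w \diamond \bar\p_a = \overline{w P \p_a} = \sum_k \frac{1}{k!}(\ad F)^k(w)\frac{1}{\psi_k(H)}(\ad E)^k(\p_a)$. Here I would instead use the right ideal $I_- = FA$: for $k\ge 1$ the leftmost factor $(\ad F)^k(w)$ lies in $I_-$, by the mirror argument ($(\ad F)(u)=Fu-uF$, both summands in $FA$ since $uF = u F \in FA$? no — $FA$ is a right ideal, so $Fu\in FA$ but $uF$ need not be). Let me reconsider: $(\ad F)(u)=Fu-uF$; $Fu\in FA = I_-$, and $uF$ — is $uF\in I_-$? Only if $I_-$ is a two-sided ideal, which it is not in $A$. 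The fix: work modulo $\II$ from the start, or note that $(\ad F)$ preserves $I_-$ as follows: $(\ad F)(Fa) = F(Fa) - (Fa)F = F(Fa - aF) = F\cdot(\ad F)(a)\in FA$. So $(\ad F)$ does preserve $I_-$, but I still need the base step $(\ad F)(w)\in I_-$ which requires $wF\in I_-$, false in general. The genuine argument must therefore be: the whole term $(\ad F)^k(w)\frac{1}{\psi_k(H)}(\ad E)^k(\p_a)$ for $k\ge1$ — expand $(\ad F)^k(w)$ as a $\Z$-linear combination of words $F^i w F^j$ with $i+j=k$, $i,j\ge0$; the terms with $i\ge 1$ begin with $F$, and after we move the scalar $\frac{1}{\psi_k(H)}$ and the rest to the right (they commute past nothing, but they are just being right-multiplied), $F^i(\cdots)$ is in $I_-=FA$ when $i\ge1$. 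The terms with $i=0$ have the form $wF^k\frac{1}{\psi_k(H)}(\ad E)^k(\p_a)$; here $F^k$ times stuff — I want this in $\II$. Since $k\ge1$, write $F^k(\cdots) = F\cdot F^{k-1}(\cdots)\in I_-$... but it is $w\cdot F^k(\cdots)$, and $w\cdot(I_-)$ is not in $I_-$.

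The honest resolution, and the step I expect to be the main obstacle, is handling these "$i=0$" tails: one shows $(\ad E)^k(\p_a)\in I_+$ for $k\ge 1$ — exactly as in the first identity, since $(\ad E)(\p_a)=E\p_a-\p_a E$, both terms in $AE=I_+$, so $(\ad E)^k(\p_a)\in I_+$. Then $wF^k\frac{1}{\psi_k(H)}(\ad E)^k(\p_a) \in w F^k \frac{1}{\psi_k(H)} I_+ \subseteq I_+$, since $I_+$ is a left ideal. So all $k\ge1$ terms lie in $I_- + I_+ = \II$, whichever decomposition one uses: the $i\ge1$ pieces are in $I_-$, the $i=0$ piece is in $I_+$. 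Hence $\bar w\diamond\bar\p_a = \overline{w\p_a}$, proving the second identity. In fact the first identity admits the same two-sided-bookkeeping proof, and the cleanest writeup is to prove both at once by the observation: in $wPz$, every summand with $k\ge1$ has the form $F^i(\cdots)E^j(\cdots)$-type expressions that land in $I_-+I_+$, because $(\ad E)^k(z)\in I_+$ when $z$ generates only the $k=0$ obstruction and $(\ad F)^k(w)$ contributes $I_-$ for its leading-$F$ part; so modulo $\II$ only the $k=0$ term $wz$ remains. I would present the argument via the identities $(\ad E)(I_+)\subseteq I_+$ and $F^i \cdot A \subseteq I_-$, $A\cdot E^j\subseteq I_+$ for $i,j\ge1$, and cite Lemma~\ref{lem:KG-lemma1} only insofar as it is not actually needed here — these two identities hold for the formal reason above, independently of the specific form of $E$ and $F$, requiring only that $E$ generates $I_+$ as a left ideal and $F$ generates $I_-$ as a right ideal.
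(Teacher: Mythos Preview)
Your argument has a genuine gap. The claim that $(\ad E)(w)\in I_+$ for arbitrary $w$ is false. You write that both summands $Ew$ and $wE$ of $(\ad E)(w)=Ew-wE$ lie in $I_+=AE$, but $I_+$ is a \emph{left} ideal: its elements carry $E$ on the right, so $wE\in I_+$ is fine while $Ew\in I_+$ is not. Concretely, take $w=x^b$: then $(\ad E)(x^b)=i\p^b$ by Lemma~\ref{lem:KG-lemma1}, and $\p^b\notin AE$ (every element of $AE$ annihilates all polynomials of degree $\le 1$ in $\mathscr{F}$, whereas $\p^b$ does not). So for the first identity your induction collapses at the base step. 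Your parallel claim for the second identity, that $E\p_a\in I_+$, happens to be true---but only because $[E,\p_a]=0$, which you never use.

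The paper's proof is a one-liner that sidesteps all of this. Since $F=\frac{i}{2}x_bx^b$ is built from $x$'s, we have $[F,x^a]=0$, so $(\ad F)^k(x^a)=0$ for every $k\ge 1$ and the sum defining $x^aPw$ collapses to its $k=0$ term $x^aw$, already in $A$ (not merely modulo $\II$). Dually, $[E,\p_a]=0$ because $E$ is built from $\p$'s, so every $k\ge 1$ term of $wP\p_a$ vanishes identically. The vanishing occurs on the factor carrying the special element ($x^a$ or $\p_a$), not on the arbitrary $w$.

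Consequently your closing remark---that the identities hold ``independently of the specific form of $E$ and $F$''---is wrong. If $\bar w_1\diamond\bar w_2=\overline{w_1w_2}$ held for all $w_1,w_2$, the diamond product would coincide with ordinary multiplication modulo $\II$, contradicting relation~\eqref{eq:dynamical-weyl-rel}. The lemma genuinely relies on $[F,x^a]=0$ and $[E,\p_a]=0$.
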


\begin{proof}
This is immediate by definition of the diamond product because $[F,x^a]=0$ and $[E,\p_a]=0$.
\end{proof}

\begin{Proposition}\label{prop:KG-relations}
The following is a complete presentation of the algebra $A/\II$, hence of the reduction algebra $Z(W'_\eta(2n),\Fsl_2)$:
\begin{subequations}
\begin{align}
\bar x^a \diamond x^b &= \bar x^b \diamond\bar x^a, \\
\bar \p_a \diamond \bar \p_b &= \bar\p_b\diamond\bar\p_a,\\
\bar\p_a\diamond\bar x^b&=\delta_a^b\bar 1+\bar x^b\diamond\bar\p_a+\frac{1}{H+1} \bar x_a\diamond\bar\p^b, \label{eq:dynamical-weyl-rel}\\
\bar x^a H &= (H+1)\bar x^a, \label{eq:xH=(H+1)x}\\
H\bar \p_a &= \p_a (H+1), \label{eq:Hpartial=partial(H+1)}\\
\bar x_a\diamond \bar x^a &= 0, \label{eq:last-relations-1}\\
\bar \p_a \diamond \bar \p^a &=0, \label{eq:last-relations-2}\\
\bar x^a\diamond \bar\p_a &= -(H+\frac{n}{2})\bar 1. \label{eq:last-relations-3}
\end{align}
\end{subequations}
We remind the reader of our summation convention, in effect in the last three relations.
\end{Proposition}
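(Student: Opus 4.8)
The plan is to argue in three stages: verify that each displayed identity holds in $A/\II$; deduce from Lemma \ref{lem:KG-lemma2} a surjective homomorphism $\Phi$ from the algebra $B$ presented by the displayed generators and relations onto $A/\II$; and prove $\Phi$ is injective. For the first stage, the commutativity relations are immediate from Lemma \ref{lem:KG-lemma2}, since $\bar x^a\diamond\bar x^b=\overline{x^ax^b}$ and $x^a,x^b$ commute in $W_\eta(2n)$, and dually for the $\bar\p_a$. For the dynamical Weyl relation \eqref{eq:dynamical-weyl-rel} one expands $\bar\p_a\diamond\bar x^b=\overline{\p_a P x^b}$; the extremal-projector sum truncates after $k=1$ because $(\ad E)^2(x^b)=0$ (as $\ad E$ kills $\p^b$) and $(\ad F)^2(\p_a)=0$ (as $\ad F$ kills $x_a$), the $k=0$ term equals $\overline{\p_ax^b}=\delta_a^b\bar1+\bar x^b\diamond\bar\p_a$ by the Weyl relation and Lemma \ref{lem:KG-lemma2}, and the $k=1$ term reduces, via \eqref{eq:xH=(H+1)x} and Lemma \ref{lem:KG-lemma2}, to $\frac1{H+1}\bar x_a\diamond\bar\p^b$. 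Relations \eqref{eq:xH=(H+1)x} and \eqref{eq:Hpartial=partial(H+1)} are the images of Lemma \ref{lem:KG-lemma1} and record the $\C(H)$-bimodule structure of $A/\II$, both one-sided multiplications by $\C(H)$ descending since $\C(H)\,\II\subseteq\II$ and $\II\,\C(H)\subseteq\II$. Finally, iterating Lemma \ref{lem:KG-lemma2} gives $\bar x_a\diamond\bar x^a=\overline{x_ax^a}=\overline{\tfrac2iF}=0$ and $\bar\p_a\diamond\bar\p^a=\overline{\tfrac2iE}=0$ because $F\in I_-\subseteq\II$ and $E\in I_+\subseteq\II$, while $\bar x^a\diamond\bar\p_a=\overline{x^a\p_a}=\overline{-H-\tfrac n2}=-(H+\tfrac n2)\bar1$ since $x^a\p_a=-H-\tfrac n2$ in $W_\eta(2n)$; this yields \eqref{eq:last-relations-1}--\eqref{eq:last-relations-3}.

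For the second stage, iterating Lemma \ref{lem:KG-lemma2} identifies $\bar x^{a_1}\diamond\cdots\diamond\bar x^{a_p}\diamond\bar\p_{b_1}\diamond\cdots\diamond\bar\p_{b_q}$ with $\overline{x^{a_1}\cdots x^{a_p}\p_{b_1}\cdots\p_{b_q}}$, and since the normal-ordered monomials span $W_\eta(2n)$ over $\C[\eta]$, their images span $A/\II$ over $\C(H)$ and $\C[\eta]$. Hence the natural map $\Phi\colon B\to A/\II$ is surjective, and it remains to show it is injective.

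The third stage is the heart of the proof. The two commutativity relations, the dynamical Weyl relation \eqref{eq:dynamical-weyl-rel}, and the bimodule relations already suffice to rewrite every element of $B$ as a $\C(H)$-combination (over $\C[\eta]$) of normal-ordered monomials $X^\alpha D^\beta$; the trace relations \eqref{eq:last-relations-1}--\eqref{eq:last-relations-2} then reduce the $X$-block and $D$-block modulo the quadrics $X_aX^a$ and $D_aD^a$, and \eqref{eq:last-relations-3} removes one further monomial in each relevant degree. I would establish that this rewriting is confluent---and that the resulting normal forms are $\C(H)$-linearly independent in $A/\II$---by a diamond-lemma argument on a suitable monomial order, the nontrivial overlaps being those of the $X^aD_a$-reduction from \eqref{eq:last-relations-3} with the dynamical Weyl rule and with the quadric reductions, whose resolution uses exactly the identities of the first stage. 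An alternative, more transparent route to injectivity is by faithfulness: since $N=\{a\in A\mid Ea\in AE\}$, the isomorphism $Z\cong A/\II$ identifies $A/\II$ with the space $\ker(E\colon A/AE\to A/AE)$ of highest-weight vectors in the left $A$-module $A/AE$, which is cyclic over $Z$ with separating vector $1+AE$; on it $\bar\p_a$ acts by left multiplication by $\p_a$, $\bar x^a$ by multiplication by $x^a$ followed by the extremal projection, and $H$ by left multiplication. Evaluating the normal forms on this module together with its $\C(H)$-translates (on which $H$ assumes arbitrary values) and checking independence there completes the proof that $\Phi$ is an isomorphism.

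The main obstacle is exactly this third stage: showing the listed relations are complete, not merely valid. The delicate feature is relation \eqref{eq:last-relations-3}, $\bar x^a\diamond\bar\p_a=-(H+\tfrac n2)\bar1$, which links monomials of different bidegree and interacts nontrivially with the dynamical Weyl relation \eqref{eq:dynamical-weyl-rel} and with the trace relations \eqref{eq:last-relations-1}--\eqref{eq:last-relations-2}; checking that no further relation is forced---equivalently, proving the relevant Poincaré--Birkhoff--Witt/confluence statement, or the faithfulness above---is where essentially all the work lies.
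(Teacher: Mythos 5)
Your first stage---checking that all eight relations actually hold in $A/\II$---is correct and is essentially the paper's proof verbatim: Lemma \ref{lem:KG-lemma2} handles the commutativity, trace, and contraction relations (together with $x_ax^a=\tfrac{2}{i}F\in\II$, $\p_a\p^a=\tfrac{2}{i}E\in\II$, and $x^a\p_a=-(H+\tfrac n2)$ in $W_\eta(2n)$), the extremal-projector sum truncates at $k=1$ to give \eqref{eq:dynamical-weyl-rel}, and Lemma \ref{lem:KG-lemma1} gives \eqref{eq:xH=(H+1)x}--\eqref{eq:Hpartial=partial(H+1)}. Your surjectivity observation (iterating Lemma \ref{lem:KG-lemma2} to hit all normal-ordered cosets) is also fine.

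The gap is your third stage, and you name it yourself: the word ``complete'' in the statement is never actually established. You propose two strategies---a diamond-lemma/confluence argument on normal forms, or faithfulness of the $Z$-action on $A/AE$---but carry out neither. The confluence route is left entirely schematic: the normal forms are not pinned down (the claim that \eqref{eq:last-relations-3} ``removes one further monomial in each relevant degree'' is not justified, and the overlaps of \eqref{eq:last-relations-3} with \eqref{eq:dynamical-weyl-rel} and the quadrics \eqref{eq:last-relations-1}--\eqref{eq:last-relations-2} are precisely where such an argument could fail), and the faithfulness route still requires exhibiting a linearly independent family of values, which you do not indicate how to produce. For comparison, the paper's proof of the Proposition consists only of the verification you performed; the completeness/basis issue is addressed separately, in the proof of the subsequent Corollary, by a structural argument you do not use: the oscillator map factors through $U(\Fsl_2)\otimes W(2n)$, inducing a surjection onto $A/\II$ from a differential reduction algebra $D(\Fsl_2)$ whose Weyl-algebra-like PBW basis is known from the h-deformed differential operator literature \cite{herlemontDifferentialCalculusHDeformed2017,herlemontRingsHdeformedDifferential2017}, with kernel generated by \eqref{eq:last-relations-1}--\eqref{eq:last-relations-3}. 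So on the validity of the relations you match the paper; on completeness your write-up has a genuine hole, which the paper fills (to the extent it does) by this different, citation-based route rather than by confluence or faithfulness.
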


\begin{proof}
The first two relations are immediate by Lemma \ref{lem:KG-lemma2}.
By definition of the diamond product, and Lemma \ref{lem:KG-lemma1}, only two terms survive:
\begin{align*}
\bar\p_aP\bar x^b &=\p_ax^b +[F,\p_a]\frac{1}{H}[E,x^b]+\cdots\II = \\
&= \p_ax^b + (-ix_a)\frac{1}{H}(i\p^b) + \II\\
&=\delta_a^b+x^b\p_a +\frac{1}{H+1}x_a\p^b
\end{align*}
By Lemma \ref{lem:KG-lemma2}, we are done. The next two relations are immediate by Lemma \ref{lem:KG-lemma1}. The orthogonality relations follow from Lemma \ref{lem:KG-lemma2}. The final relation follows from the fact that in $W(2n)$ we have $H=-\frac{1}{2}(x^a\p_a+\p_ax^a)=-\frac{1}{2}(2x^a\p_a + n) = -x^a\p_a -\frac{n}{2}$, hence $x^a\p_a=-(H+\frac{n}{2})$. Now use Lemma \ref{lem:KG-lemma2}.
\end{proof}

Let $R_\eta=\C_\eta(H)$ where $\C_\eta=\C[\eta]$, and $R=\C(H)$.

\begin{Corollary}
Suppose we specialize $\eta_{ab}$ to a diagonal matrix with complex entries. Then a basis for $A/\II$ as a left $R$-module is (here we drop the diamond from the notation):
\begin{equation}
\big\{\bar x_1^{r_1}\cdots \bar x_n^{r_n}\bar\p_1^{s_1}\cdots\bar \p_n^{s_n}\mid r_i,s_j\in \Z_{\ge 0},\, r_n+s_n\le 1\big\}
\end{equation}
\end{Corollary}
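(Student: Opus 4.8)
The plan is to derive the basis statement from the presentation in Proposition~\ref{prop:KG-relations} by a spanning-plus-linear-independence argument. For the spanning direction, I would start from the fact that $A/\II$ is generated over $R=\C(H)$ by the classes $\bar x^a$ and $\bar\p_a$ (this follows because $W(2n)$ is generated by the $x^a,\partial_a$, and the diamond product of such generators projects, by Lemma~\ref{lem:KG-lemma2}, onto ordinary words in $W(2n)$ modulo $\II$). After specializing $\eta$ to a diagonal matrix we may pass between $\bar x^a$ and $\bar x_a$, and between $\bar\partial_a$ and $\bar\partial^a$, by scalars, so it suffices to show the lowered/raised monomials $\bar x_1^{r_1}\cdots\bar x_n^{r_n}\,\bar\partial_1^{s_1}\cdots\bar\partial_n^{s_n}$ span. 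Relations \eqref{eq:xH=(H+1)x}--\eqref{eq:Hpartial=partial(H+1)} let me move all rational functions of $H$ to the left; relations (a) and (b) let me sort the $x$'s among themselves and the $\partial$'s among themselves; relation \eqref{eq:dynamical-weyl-rel} (the dynamical Weyl relation) lets me move every $\bar\partial$ to the right of every $\bar x$ at the cost of lower-degree terms and an $H$-denominator. This gives spanning by all ordered monomials $\bar x_1^{r_1}\cdots\bar\partial_n^{s_n}$ with no constraint on $r_n,s_n$; the constraint $r_n+s_n\le 1$ then comes from the three ``extra'' relations \eqref{eq:last-relations-1}--\eqref{eq:last-relations-3}, which in the diagonal specialization read $\sum_a \eta_{aa}(\bar x^a)^2=0$, $\sum_a\eta^{aa}(\bar\partial_a)^2=0$, and $\sum_a \bar x^a\diamond\bar\partial_a=-(H+\tfrac n2)$: each of these expresses one ``square'' or one mixed product in the last coordinate in terms of the other coordinates and lower-degree data, so an induction on total degree in the index-$n$ letters reduces any monomial to one with $r_n+s_n\le 1$.

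For linear independence, the cleanest route is to use the faithful action on $\mathscr F^+$, or more directly to compare with a known basis of $A/\II$ before imposing the last three relations. Concretely, by the Diamond Lemma (or by the known PBW-type basis for reduction algebras of $\Fsl_2$, cf.\ \cite{KhoOgi2008}) the monomials $\bar x_1^{r_1}\cdots\bar x_n^{r_n}\,\bar\partial_1^{s_1}\cdots\bar\partial_n^{s_n}$ with \emph{arbitrary} $r_i,s_j\ge 0$ form an $R$-basis of $Z(W'_\eta(2n),\Fsl_2)$ built only from relations (a),(b),\eqref{eq:dynamical-weyl-rel},\eqref{eq:xH=(H+1)x},\eqref{eq:Hpartial=partial(H+1)}; then I would exhibit the quotient by the ideal generated by \eqref{eq:last-relations-1}--\eqref{eq:last-relations-3} and check that, in the diagonal specialization, these three relations rewrite exactly into ``leading term = lower terms'' rules that the constrained monomials are stable under, so no further collapse occurs. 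Alternatively, and perhaps more transparently, one tests linear independence directly on the module $\mathscr F=\C_\eta[x^1,\dots,x^n]$: the class $\bar\partial_a$ acts as $\partial_a$ and $\bar x^a$ acts as the raising operator $\tilde x^a$ from Section~\ref{sec:explicit-solutions}, and one checks that the constrained monomials act as linearly independent operators on the solution space $\mathscr F^+$ by examining their effect on low-degree harmonics (the leading symbols are distinct monomials in the commuting symbols $\xi_a$ dual to the $x^a$ after projecting out the relation $\sum\eta^{ab}\xi_a\xi_b=0$, which precisely accounts for the $r_n+s_n\le 1$ truncation).

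The main obstacle I anticipate is the spanning step's bookkeeping: showing that repeated use of the three syzygy-type relations \eqref{eq:last-relations-1}--\eqref{eq:last-relations-3} genuinely terminates and lands inside the claimed spanning set, rather than cycling or producing monomials with $r_n+s_n\ge 2$ in disguise. The delicate point is that \eqref{eq:last-relations-1} and \eqref{eq:last-relations-2} are homogeneous of degree $2$, so using them to eliminate $(\bar x^n)^2$ or $(\bar\partial_n)^2$ trades one index-$n$ square for index-$<n$ squares without lowering total degree; one must therefore set up the induction on a well-chosen monomial order (for instance, lexicographic in the exponents read from coordinate $n$ downward, refined by total degree) and verify that every rewrite strictly decreases this order. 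A secondary but routine subtlety is the passage $\bar x^a \leftrightarrow \bar x_a$ and $\bar\partial_a\leftrightarrow\bar\partial^a$: this is where the diagonality of $\eta$ is essential, since it makes $x_a=\eta_{aa}x^a$ a scalar multiple of $x^a$ (no summation), so the monomials in the two conventions differ only by an invertible scalar in $\C_\eta$ and the count of basis elements is unaffected. Once the order is fixed, the rest is the kind of straightforward normal-form argument the Diamond Lemma is designed to certify.
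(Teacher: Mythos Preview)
Your plan is sound and would yield a correct proof, but it takes a different route from the paper. The paper does not carry out a direct Diamond-Lemma rewriting argument; instead it observes that the homomorphism $U(\Fsl_2)\to W(2n)$ factors through the comultiplication as $U(\Fsl_2)\to U(\Fsl_2)\otimes U(\Fsl_2)\to U(\Fsl_2)\otimes W(2n)\to W(2n)$, and that on the reduction-algebra level the last map induces a surjection $D(\Fsl_2)\twoheadrightarrow A/\II$ from a differential reduction algebra of the type studied by Herlemont--Ogievetsky \cite{herlemontDifferentialCalculusHDeformed2017,herlemontRingsHdeformedDifferential2017}, where the \emph{unrestricted} ordered monomials are already known to be a basis. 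The kernel is generated by \eqref{eq:last-relations-1}--\eqref{eq:last-relations-3}, and the constrained basis follows. Your hands-on normal-form argument is more elementary and self-contained (no appeal to the Herlemont--Ogievetsky PBW theorem), and your third paragraph correctly isolates the one nontrivial bookkeeping point the paper leaves implicit: choosing a monomial order in which each application of \eqref{eq:last-relations-1}--\eqref{eq:last-relations-3} strictly decreases.

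One caveat on your Route~B for linear independence: testing operators on $\mathscr F^+$ is shakier than you suggest, because only the subalgebra $Z^\circ$, not the full localized $A/\II$, acts on $\mathscr F^+$, and elements of $R=\C(H)$ must be evaluated at the discrete spectrum $\{-d-\tfrac n2: d\ge 0\}$, so faithfulness over the field $\C(H)$ is not automatic. Your Route~A (Diamond Lemma on the presentation, or quoting the PBW basis for the intermediate algebra and then passing to the quotient) is the safer choice and is essentially what the paper does, just packaged structurally rather than combinatorially.
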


\begin{proof}
The algebra homomorphism $U(\Fsl_2)\to W(2n)$ that we consider, factors through $U(\Fsl_2)\otimes W(2n)$ as follows:
\[U(\Fsl_2)\to U(\Fsl_2)\otimes U(\Fsl_2) \to U(\Fsl_2)\otimes W(2n) \to W(2n)\]
The first map is the comultiplication. The second is $1\otimes \varphi$. The last map is $\ep\otimes 1$ where $\ep$ is the counit. Passing to double coset reduction algebras the last map induces an algebra map
\[D(\Fsl_2)\to W'(2n)/\II \]
where $D(\Fsl_2)$ is a differential reduction algebra of $\Fsl_2$. It is of similar type as those considered in \cite{herlemontDifferentialCalculusHDeformed2017,herlemontDifferentialCalculusMathbf2017,herlemontRingsHdeformedDifferential2017}, which were based on the type $A$ oscillator representation. But it has the same kind of basis for the same reason, given as above but without the restrictions on $r_n, s_n$. In other words, it looks like the Weyl algebra. The kernel of the map is the generated by the relations \eqref{eq:last-relations-1}--\eqref{eq:last-relations-3}.
From this the claim follows. 
\end{proof}

\subsection{Explicit Solutions using Raising Operators}\label{sec:explicit-solutions}

\begin{Definition}
Fix non-negative integers $\ell$. Let $a=(a_1,a_2,\ldots,a_\ell)\in\{1,2,\ldots,n\}^\ell$ be a sequence of integers between $1$ and $n$. We introduce the following terminology for certain products of elements in the Weyl algebra $W_\eta(2n)$:
\begin{enumerate}
\item A \emph{mixed $x\p$-monomial indexed by $a$} is a product of the form 
\begin{equation}
z^{a_1}z^{a_2}\cdots z^{a_\ell}
\end{equation}
where for each $1\le i\le\ell$, the symbol $z^{a_i}$ equals $x^{a_i}$ or $\p^{a_i}$.
\item An \emph{ordered $\eta x\p$-monomial indexed by $a$} is a product of the form
\begin{equation}\label{eq:ordered-def}
(\eta^{a_1a_2}\cdots\eta^{a_{2r-1}a_{2r}})(x^{a_{2r+1}}\cdots x^{a_{2r+s}})(\p^{a_{2r+s+1}}\cdots\p^{a_{2r+s+t}})
\end{equation}
where $r,s,t\ge 0$ count the number of $\eta$'s, $x$'s, and $\p$'s respectively.
\end{enumerate}
\end{Definition}

We will use the $\Z$-gradation on $W_{\eta}(2n)$ determined by the condition that $\deg x^a=1$, $\deg \p_a=-1$, $\deg \eta^{ab}=\deg \eta_{ab}=0$, for all $a,b$.

\begin{Lemma}\label{lem:monomial-sums}
Let $\ell$ and $d$ be non-negative integers. Let $a=(a_1,a_2,\ldots,a_\ell)\in\{1,2,\ldots,n\}^\ell$. Then, in the Weyl algebra $W_{\eta}(2n)$, the following two elements are equal:
\begin{enumerate}[{\rm (i)}]
\item the sum of all degree $d$ mixed $x\p$-monomials indexed by $a$;
\item the sum of all distinct degree $d$ ordered $\eta x\p$-monomials indexed by $b$, where $b=(b_1,b_2,\ldots,b_\ell)$ can be obtained from $a$ by permuting the entries.
\end{enumerate}
\end{Lemma}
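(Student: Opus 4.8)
The plan is to prove the identity by induction on the length $\ell$ of the index sequence, peeling off one symbol $z^{a_1}$ at a time from the left, and tracking how the $\p$'s must be commuted past the $x$'s to reach the ordered form in \eqref{eq:ordered-def}. Concretely, write $S_d(a)$ for the sum in (i) and $T_d(a)$ for the sum in (ii). For $\ell=0$ both sides are $1$ if $d=0$ and $0$ otherwise, giving the base case. For the inductive step I would expand $S_d(a_1,a_2,\ldots,a_\ell)$ according to the two choices for the leftmost factor: $S_d(a) = x^{a_1} S_{d-1}(a_2,\ldots,a_\ell) + \p^{a_1} S_{d+1}(a_2,\ldots,a_\ell)$, and then apply the inductive hypothesis to each tail.

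The heart of the argument is a combinatorial bijection on the right-hand side. Expanding $T_d(b)$ over all permutations $b$ of $a$, I group the ordered $\eta x\p$-monomials by whether the index $a_1$ ends up (a) inside an $\eta$-factor paired with some $a_j$, (b) among the $x$'s, or (c) among the $\p$'s. Case (b) with $a_1$ contributing an $x^{a_1}$ and case (c) with $a_1$ contributing a $\p^{a_1}$ match the two terms of the recursion for $S_d(a)$ after reindexing. The subtle point is case (a) together with the commutator terms that arise when one pushes the leading $\p^{a_1}$ in the term $\p^{a_1}\cdot(\text{ordered monomial})$ rightward past the $x$-block: each such commutator $[\p^{a_1},x^{a_j}]=\eta^{a_1 a_j}$ produces exactly an $\eta$-factor pairing $a_1$ with $a_j$, i.e.\ precisely the monomials enumerated in case (a). One must check that the multiplicities line up — that summing over all permutations $b$ of $a$ reproduces, with the correct count, every ordered monomial obtained by this commutation process — and conversely that no ordered monomial is produced with the wrong coefficient. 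This is essentially a double-counting statement: an ordered $\eta x\p$-monomial with $r$ $\eta$-factors, $s$ $x$'s, and $t$ $\p$'s indexed by a fixed rearrangement of $a$ arises from a controlled number of $(b,\text{term})$ pairs, and one verifies the count is independent of the monomial.

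I expect the main obstacle to be bookkeeping the coefficients in this bijection cleanly, especially the interaction between (i) the sum over permutations $b$ of $a$ on the ordered side, which already overcounts ordered monomials when $a$ has repeated entries, and (ii) the commutator terms, which reshuffle an $x$ and a $\p$ into a single $\eta$. A cleaner route that I would try first, to avoid the permutation-counting entirely, is to reformulate both sides using the operators $x^a$ and $\p^a$ as acting on a free polynomial algebra and observe that Lemma~\ref{lem:monomial-sums} is equivalent to the ``normal ordering'' identity
\[
\sum_{z^{a_i}\in\{x^{a_i},\p^{a_i}\}} z^{a_1}\cdots z^{a_\ell}
\;=\;
\prod_{i=1}^{\ell}\bigl(x^{a_i}+\p^{a_i}\bigr),
\]
expanded and then normal-ordered via the Weyl relations $[\p^{a},x^{b}]=\eta^{ab}$; the graded piece of degree $d$ of the right-hand side, when all $\eta$'s are collected to the left and $x$'s before $\p$'s, is exactly $T_d$ summed over rearrangements $b$ of $a$ by Wick's theorem. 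Either way, the degree bookkeeping ($r,s,t$ with $s-t=d$ and $2r+s+t=\ell$) is automatic once the ungraded identity is established, so the real content is the single normal-ordering expansion.
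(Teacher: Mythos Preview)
Your strategy is sound but takes a different route from the paper. The paper does not induct on $\ell$; instead it observes that normal-ordering any single mixed $x\p$-monomial via $\p^a x^b = x^b\p^a + \eta^{ab}$ yields a sum of ordered $\eta x\p$-monomials with unit coefficients, and then shows by induction on the number $r$ of $\eta$-factors that each ordered monomial in (ii) traces back to a \emph{unique} mixed monomial in (i). The trace-back is explicit: a factor $\eta^{a_i a_j}$ with $i<j$ (positions in the original sequence $a$) can only have arisen from $\p^{a_i}$ at slot $i$ commuting past $x^{a_j}$ at slot $j$; one removes that factor, finds by induction the unique shorter mixed monomial producing the remainder, and reinserts $\p^{a_i}$ and $x^{a_j}$ at slots $i$ and $j$. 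This bijection sidesteps any multiplicity bookkeeping. Your worry about overcounting stems from reading (ii) as a sum over all permutations $b$ of $a$; the paper's convention (spelled out in the examples following the lemma) is that (ii) is a sum over \emph{distinct} ordered monomials with the $a_i$ treated as formal labels, so no stabilizer factor enters. With that reading, your case split (a)/(b)/(c) also closes cleanly: each case-(a) monomial $\eta^{a_1 a_k}\cdot N$ arises from exactly one commutator term, namely from the unique $M\in T_{d+1}(a')$ obtained by adjoining $x^{a_k}$ to the $x$-block of $N$. Your Wick-theorem alternative is likewise correct and arguably the most conceptual route; the paper's trace-back argument is in effect a bare-hands verification of Wick's theorem for this particular product.
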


Before proving this, we consider two examples.

\begin{Example}
When $\ell=3$ and $d=1$, the sum (i) equals
\[\p^{a_1}x^{a_2}x^{a_3}+x^{a_1}\p^{a_2}x^{a_3}+x^{a_1}x^{a_2}\p^{a_3}\]
and the sum (ii) equals
\begin{equation}\label{eq:l3d1-eta-x-partial-monomial-sum}
x^{a_1}x^{a_2}\p^{a_3}+x^{a_1}x^{a_3}\p^{a_2}+x^{a_2}x^{a_3}\p^{a_1} + \eta^{a_1a_2}x^{a_3}+\eta^{a_1a_3}x^{a_2}+\eta^{a_2a_3}x^{a_1}.
\end{equation}
This illustrates that, for the purposes of Lemma \ref{lem:monomial-sums}(ii), $\eta^{a_1a_2}x^{a_3}$ and $\eta^{a_1a_3}x^{a_2}$ are regarded as distinct and both should be included in the sum (ii), regardless of whether or not $a_2=a_3$. On the other hand, only one of the terms $\eta^{a_1a_2}x^{a_3}$ and $\eta^{a_2a_1}x^{a_3}$ should be included; the rationale being that they are always equal due to the symmetry of $\eta$.
\end{Example}

\begin{Example}
When $\ell=4$ and $d=0$ and $a=(1,2,3,4)$, the sum (i) equals
\[\p^1\p^2x^3x^4+\p^1x^2\p^3x^4+\p^1x^2x^3\p^4+x^1\p^2\p^3x^4+x^1\p^2x^3\p^4+x^1x^2\p^3\p^4\]
and the sum (ii) equals
\begin{align*}
&x^1x^2\p^3\p^4+x^1x^3\p^2\p^4+x^1x^4\p^2\p^3+x^2x^3\p^1\p^4+x^2x^4\p^1\p^3+x^3x^4\p^1\p^2\\ 
+&\eta^{12}x^3\p^4 
+\eta^{13}x^2\p^4 
+\eta^{14}x^2\p^3
+\eta^{23}x^1\p^4
+\eta^{24}x^1\p^3
+\eta^{34}x^1\p^2 \\ 
+&\eta^{12}x^4\p^3
+\eta^{13}x^4\p^2
+\eta^{14}x^3\p^2
+\eta^{23}x^4\p^1
+\eta^{24}x^3\p^1
+\eta^{34}x^2\p^1
\end{align*}
In these expression, replacing a superscript $i$ by $a_i$, we obtain the sums (i) and (ii) for general index vector $a=(a_1,a_2,a_3,a_4)$ in the case of $\ell=4$ and $d=0$.
\end{Example}

\begin{proof}[Proof of Lemma \ref{lem:monomial-sums}.]
Each mixed $x\p$-monomial can be rewritten, using the Weyl algebra relation $\p^a x^b=x^b\p^a+\eta^{ab}$, into a sum of ordered $\eta x\p$-monomials. So it suffices to show that each term from the sum (ii) occurs in exactly one term from (i). Consider an ordered $\eta x\p$-monomial indexed by $b$, having $r$ factors of $\eta$'s. We proceed by induction on $r$. If $r=0$, then it is clear that there is only one term it could have come from: the one where the exponents are in the order $a_1,\ldots,a_\ell$. For example the ordered $\eta x\p$-monomial $x^{a_2}x^{a_4}\p^{a_1}\p^{a_4}$ came from the mixed $x\p$-monomial $\p^{a_1}x^{a_2}\p^{a_3}x^{a_4}$. Next, suppose $r>0$. That is, we have $\eta^{a_ia_j}y$ where without loss of generality $i<j$, and $y$ is an ordered $\eta x\p$-monomial with $r-1$ factors of $\eta$ indexed by $a'$, where $a'$ (of length $\ell-2$) is obtained from $a$ by deleting $a_i$ and $a_j$. By the induction hypothesis, $y$ occurs in a unique mixed $x\p$-monomial indexed by $a'$. Inserting $\p^{a_i}$ and $x^{a_j}$ into this mixed monomial so that they appear in the $i$:th and $j$:th slot respectively, gives the required mixed $x\p$-monomial. For example, $\eta^{a_2a_3}x^{a_1}$ occurs when rewriting the mixed $x\p$-monomial $x^{a_1}\p^{a_2}x^{a_3}$ as $x^{a_1}x^{a_3}\p^{a_2}+x^{a_1}\eta^{a_2a_3}$ (as the second term).
\end{proof}

\begin{Lemma}
The following equality holds:
\begin{equation}
P(x^{a_1}+I)\cdots P(x^{a_\ell}+I) = P(x^{a_1}\cdots x^{a_\ell}+I),
\end{equation}
where we regard $P$ as an operator on $W'(2n)/I$.
\end{Lemma}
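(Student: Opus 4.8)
The plan is to transport the identity through the algebra isomorphism $Q$ between the reduction algebra $Z=N/I$ and the localized double coset space $(A/\II,\diamond)$, thereby reducing it to an iterated $\diamond$-product that is handled directly by Lemma~\ref{lem:KG-lemma2}.

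First I would unwind the notation. For $w\in A=W'_\eta(2n)$, applying the ``operator $P$'' to $w+I$ produces, by the Proposition describing the inverse of $Q$, exactly $Q^{-1}(\bar w)\in Z$, i.e.\ the unique representative of $\bar w=w+\II$ lying in the normalizer $N$ (here $I=I_+=A\cdot E$). In particular each $P(x^{a_i}+I)$ is an element of the \emph{algebra} $Z$, so the product on the left-hand side is the associative product of $Z$ — equivalently: lift each factor to $N\subseteq A$, multiply in $A$, and project back to $N/I$. This interpretation is forced since $W'_\eta(2n)/I=A/AE$ is merely a left module, not a ring. The right-hand side is $Q^{-1}\big(\overline{x^{a_1}\cdots x^{a_\ell}}\big)$, and since $x^{a_1}\cdots x^{a_\ell}\in W(2n)\subseteq A$, no denominators in $H$ intervene.

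Next, since $Q$ is an algebra isomorphism $(Z,\cdot)\to(A/\II,\diamond)$, so is $Q^{-1}$; multiplicativity then gives
\[
P(x^{a_1}+I)\cdots P(x^{a_\ell}+I)=Q^{-1}(\bar x^{a_1})\cdots Q^{-1}(\bar x^{a_\ell})=Q^{-1}\big(\bar x^{a_1}\diamond\cdots\diamond\bar x^{a_\ell}\big).
\]
It therefore remains to show $\bar x^{a_1}\diamond\cdots\diamond\bar x^{a_\ell}=\overline{x^{a_1}\cdots x^{a_\ell}}$ in $A/\II$. I would prove this by induction on $\ell$: the cases $\ell\le 1$ are trivial, and assuming $\bar x^{a_2}\diamond\cdots\diamond\bar x^{a_\ell}=\overline{x^{a_2}\cdots x^{a_\ell}}$, the first identity of Lemma~\ref{lem:KG-lemma2} applied with $w=x^{a_2}\cdots x^{a_\ell}\in W(2n)$ gives $\bar x^{a_1}\diamond\overline{x^{a_2}\cdots x^{a_\ell}}=\overline{x^{a_1}\cdots x^{a_\ell}}$; associativity of $\diamond$ closes the induction. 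Applying $Q^{-1}$ and recalling the identification of the right-hand side finishes the argument.

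I do not anticipate a genuine obstacle: the proof is essentially bookkeeping, and the only points requiring care are (i) that the left-hand product is taken in $Z$ and not in the module $A/I$, (ii) that $P(\,\cdot\,+I)$ is literally $Q^{-1}$ of the corresponding class in $A/\II$, and (iii) that Lemma~\ref{lem:KG-lemma2} is invoked only for the elements $x^{a_j}\cdots x^{a_\ell}$, which genuinely lie in $W(2n)$. If one preferred to bypass $Q$ entirely, one could instead verify directly that $P(x^a+I)\in N$ and that $x^a P z\equiv x^a z\pmod{\II}$ for all $z\in A$ because $(\ad F)(x^a)=[F,x^a]=0$ (all the $x$'s commute), and then iterate; but this merely re-establishes Lemma~\ref{lem:KG-lemma2} in the case at hand.
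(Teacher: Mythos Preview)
Your argument is correct and complete. The interpretation of the left-hand side as a product in the algebra $Z=N/I$, the identification $P(\,\cdot\,+I)=Q^{-1}(\overline{\,\cdot\,})$, and the induction via Lemma~\ref{lem:KG-lemma2} all go through as you describe.

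However, your route differs from the paper's. The paper does not transport through $Q$ or invoke the diamond product at all. Instead it argues directly in $A/I$: each side, once expanded, has the shape $x^{a_1}\cdots x^{a_\ell}+FY+I$ for some $Y$ (for the left side this comes from expanding each factor $P(x^{a_j}+I)=x^{a_j}+\frac{-1}{H+2}F[E,x^{a_j}]+I$; for the right side directly from the projector series). Hence the difference lies in $(FA+I)/I$. But both sides also lie in $N/I$, on which $P$ acts as the identity, so the difference equals $P$ applied to something of the form $F(\cdot)+I$, which vanishes because $PF=0$. The paper explicitly attributes this uniqueness trick to Mickelsson.

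The two arguments rest on the same underlying fact, namely $[F,x^a]=0$; your approach packages it through the established machinery of $Q$ and Lemma~\ref{lem:KG-lemma2}, which is cleaner once that machinery is available, while the paper's approach isolates the essential projector identity $PF=0$ and works without ever naming the diamond product, making it more self-contained and closer to the historical source of the idea.
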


\begin{proof}
The left hand side, when each factor is expanded as $P(x^a+I)=x^a+\frac{-1}{H+2}F[E,x^a] +I$ and distributing can be written 
\[LHS = x^{a_1}x^{a_2}\cdots x^{a_\ell} + FY_1 + I\]
for some element $Y_1$ of the localized Weyl algebra $W'_\eta(2n)$. Similarly, the RHS directly by definition has the same form:
\[RHS = x^{a_1}x^{a_2}\cdots x^{a_\ell} + FY_2 + I\]
So their difference can be written $F(Y_1-Y_2)+I$. But both sides are elements of the reduction algebra $N'/I'$. Therefore their difference also belongs to $N'/I'$. Since the extremal projector is the identity on this space, we have
\[LHS-RHS = F(Y_1-Y_2) +I= P(F(Y_1-Y_2)+I) = (PF)(Y_1-Y_2+I)=0\]
by the property $PF=0$ of the extremal projector. This proves that LHS=RHS.
\end{proof}

\begin{Remark}
In fact, this proof goes back to Mickelsson's paper\cite{Mic1973} from 1973. He realized that elements from the normalizer are uniquely determined by their coset modulo the left ideal generated by the negative nilpotent part of the smaller Lie algebra $\mathfrak{k}$. Another way to phrase this is that the extremal projector, when viewed as a map from $N/I$ to $A/II$ is injective. (In fact it is bijective too, when working in the localized setting.)
\end{Remark}

\begin{Definition}
For any non-negative integer $\ell$ and integers $a_1,a_2,\ldots,a_\ell\in\{1,2,\ldots,n\}$ we define
\begin{equation}
|a_1a_2\cdots a_\ell \rangle = (\tilde x^{a_1}\tilde x^{a_2}\cdots \tilde x^{a_\ell})\cdot 1=P(x^{a_1}x^{a_2}\cdots x^{a_\ell}+I)\cdot 1.
\end{equation}
By construction, these are polynomial solutions to the equation $\partial_i\partial^i\phi=0$.
\end{Definition}

\begin{Definition}
An \emph{ordered $\eta x$-monomial} is an ordered $\eta x\p$-monomial which does not have any $\p$.
\end{Definition}

For example, $\eta^{a_1a_2}x^{a_3}x^{a_4}$ is an ordered $\eta x$-monomial indexed by $(a_1,a_2,a_3,a_4)$.

\begin{Theorem} \label{thm:explicit-solutions}
Suppose $n\ge 3$. We have the following explicit formula:
\begin{equation}\label{eq:explicit-formula}
|a_1a_2\cdots a_\ell\rangle = 
\sum_{k=0}^\infty f_k(-\ell-\frac{n}{2})\cdot(\frac{1}{2}x_bx^b)^k\cdot \sum \begin{pmatrix}\text{all degree $\ell-2k$ ordered}\\\text{$\eta x$-monomials indexed by}\\ \text{some permutation of $a$}\end{pmatrix}
\end{equation}
where $f_k$ is the rational function
\begin{equation}
f_k(t)=\frac{1}{(t+2)(t+3)\cdots(t+1+k)}.
\end{equation}
\end{Theorem}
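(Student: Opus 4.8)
The plan is to write out $P$ as the $\Fsl_2$-extremal projector $\sum_{k\ge 0}\tfrac{(-1)^k}{k!}F^k\tfrac{1}{\psi_k(H)}E^k$ (with $\psi_k$ as in Section~\ref{sec:presentation}), apply it to the coset $x^{a_1}\cdots x^{a_\ell}+I$, and evaluate the resulting element of the reduction algebra on the constant solution $1\in\mathscr F^+$. Using that $E^k w\equiv(\ad E)^k(w)\pmod I$ and that $I\cdot 1=0$ (because $E\cdot 1=0$), the claim reduces to the identity
\[
|a_1a_2\cdots a_\ell\rangle=\sum_{k\ge 0}\frac{(-1)^k}{k!}\;F^k\;\frac{1}{\psi_k(H)}\;\bigl((\ad E)^k(x^{a_1}\cdots x^{a_\ell})\bigr)\cdot 1 ,
\]
which I would establish by treating the three operators $(\ad E)^k$, $1/\psi_k(H)$, $F^k$ one at a time.

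Since $\ad E$ is a derivation of $W_\eta(2n)$ with $[E,x^a]=i\p^a$ (Lemma~\ref{lem:KG-lemma1}) and $[E,\p_a]=0$, a Leibniz count gives that $(\ad E)^k(x^{a_1}\cdots x^{a_\ell})$ equals $i^k\,k!$ times the sum of all degree $\ell-2k$ mixed $x\p$-monomials indexed by $a$, the factor $k!$ recording the ordered choice of which $k$ of the $\ell$ slots become $\p$'s. By Lemma~\ref{lem:monomial-sums}(ii) with $d=\ell-2k$, this equals $i^k\,k!$ times the sum of all distinct degree $\ell-2k$ ordered $\eta x\p$-monomials indexed by permutations of $a$. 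Evaluating on $1$ kills every term still containing a $\p$, since in an ordered $\eta x\p$-monomial the $\p$'s all sit at the far right; what remains is exactly $i^k\,k!\,\sigma_k$, where $\sigma_k$ is the inner sum of \eqref{eq:explicit-formula}, namely the sum of the degree $\ell-2k$ ordered $\eta x$-monomials (equivalently, those carrying exactly $k$ factors of $\eta$) indexed by permutations of $a$. In particular $\sigma_k=0$ once $2k>\ell$, so the $k$-sum is finite.

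For the remaining factors, $\sigma_k$ is homogeneous of $\Z$-degree $\ell-2k$, and the relations $x^aH=(H+1)x^a$, $H\p_a=\p_a(H+1)$ of Lemma~\ref{lem:KG-lemma1} give $g(H)u=u\,g(H-d)$ for homogeneous $u$ of degree $d$; since $H\cdot 1=-\tfrac n2$, the operator $1/\psi_k(H)$ contributes the scalar $\psi_k(-\ell+2k-\tfrac n2)^{-1}$, and a direct substitution in the length-$k$ product $\psi_k$ identifies this scalar with $f_k(-\ell-\tfrac n2)$. Writing $F^k=(\tfrac i2 x_bx^b)^k=i^k(\tfrac12 x_bx^b)^k$ pulls out the factor $(\tfrac12 x_bx^b)^k$, and the remaining scalar of the $k$-th term is $\tfrac{(-1)^k}{k!}\cdot i^k k!\cdot i^k=(-1)^k i^{2k}=1$. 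Adding over $k$ yields $\sum_k f_k(-\ell-\tfrac n2)(\tfrac12 x_bx^b)^k\sigma_k$, which is \eqref{eq:explicit-formula}; the hypothesis $n\ge 3$ is exactly what guarantees that $f_k(-\ell-\tfrac n2)$ is well defined for all $k$ with $2k\le\ell$.

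The argument is conceptually light, and the real obstacle is bookkeeping: pinning down the precise normalization of $P$ in the shifted convention (the $k$-th coefficient $\tfrac{(-1)^k}{k!}$, and the placement of $1/\psi_k(H)$ between $F^k$ and $(\ad E)^k$), keeping track that $1/\psi_k(H)$ acts on the degree-$(\ell-2k)$ component rather than the original degree-$\ell$ one, and verifying that all the powers of $i$ and of $2$ collapse as claimed. A convenient check is $\ell=2$, where the formula returns $|a_1a_2\rangle=x^{a_1}x^{a_2}-\tfrac1n\eta^{a_1a_2}x_bx^b$, the trace-free part of $x^{a_1}x^{a_2}$. The one genuinely combinatorial ingredient, the passage from mixed $x\p$-monomials to ordered $\eta x\p$-monomials, is already provided by Lemma~\ref{lem:monomial-sums}.
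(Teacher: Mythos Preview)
Your proposal is correct and takes essentially the same route as the paper: expand the $\Fsl_2$ extremal projector, compute $(\ad E)^k(x^{a_1}\cdots x^{a_\ell})$ by Leibniz, invoke Lemma~\ref{lem:monomial-sums} to pass to ordered $\eta x\p$-monomials, apply to $1$ to kill the $\p$'s, and evaluate the $H$-factor as a scalar. The only cosmetic difference is the placement of the rational function of $H$ --- you keep $1/\psi_k(H)$ between $F^k$ and $(\ad E)^k$ and let it act on the degree-$(\ell-2k)$ piece, whereas the paper commutes it to the left as $f_k(H)$ acting on the final degree-$\ell$ polynomial --- and these of course agree.
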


\begin{proof}
We apply the extremal projector associated to $\Fsl_2$ \cite{AshSmiTol1971,asherovaProjectionOperatorsSimple1973} to a coset in $A/I$ represented by a product of $x^a$'s.
\begin{equation}
P(x^{a_1}x^{a_2}\cdots x^{a_\ell}+I) = \sum_{k=0}^\infty \frac{(-1)^k}{k!}f_k(H) F^k (\ad E)^k (x^{a_1}x^{a_2}\cdots x^{a_\ell})+I.
\end{equation}
Using that $F=\frac{i}{2}x_b x^b$ and $E=\frac{i}{2}\p_c\p^c$ the imaginary units cancel $(-1)^k$. Furthermore, $\frac{1}{2}[\p_c\p^c , x^a] = \p^a$ by Leibniz rule and $[\p_c,\,x^a]=\delta_c^a$, $[\p^c,\,x^a]=\eta^{ca}$. From $x^a$ to $\p^a$, the degree has dropped by $2$. Applying $\ad\; \frac{1}{2}\p_c\p^c$ again converts another factor of $x$ to a $\p$. Thus, when computing $(\ad \frac{1}{2}\p_c\p^c)^k(x^{a_1}\cdots x^{a_\ell})$, the result is the sum of all mixed $x\p$ monomials with exactly $k$ $\p$'s, or equivalently, of degree $\ell-2k$. Each such monomial is overcounted by a factor of $k!$, which cancels against the $k!$ in the denominator. Thus we get
\begin{equation}
P(x^{a_1}x^{a_2}\cdots x^{a_\ell}+I) = \sum_{k=0}^\infty f_k(H)(\frac{1}{2}x_bx^b)^k \sum \begin{pmatrix}\text{all degree $\ell-2k$ mixed}\\\text{$x\p$-monomials indexed by $a$}\end{pmatrix}+I.
\end{equation}
By Lemma \ref{lem:monomial-sums}, this implies that
\begin{equation}
P(x^{a_1}x^{a_2}\cdots x^{a_\ell}+I) = \sum_{k=0}^\infty f_k(H)(\frac{1}{2}x_bx^b)^k \sum \begin{pmatrix}\text{all degree $\ell-2k$ ordered}\\\text{$\eta x\p$-monomials indexed by}\\ \text{some permutation of $a$}\end{pmatrix}+I.
\end{equation}
Lastly, applying this to the constant solution $1$, any ordered $\eta x\p$-monomials involving a $\p$ will not contribute. The remaining monomials are $\eta x$-monomials. We also use that $H=-\frac{1}{2}(x^a\p_a+\p_ax^a)=-x_a\p^a-\frac{n}{2}$ acts by the scalar $-\ell-\frac{n}{2}$ on any degree $\ell$ polynomial. This proves the claim.
\end{proof}

\begin{Example}
For $\ell=2$ we get, using $f_0=1$, $f_1(-2-\frac{n}{2})=\frac{1}{-2-\frac{n}{2}+2}=-\frac{2}{n}$,
\begin{equation}
|a_1a_2\rangle=P(x^{a_1}x^{a_2}+I)\cdot 1 = x^{a_1}x^{a_2}-\frac{1}{n}x_bx^b \eta^{a_1a_2}.
\end{equation}
It can be checked directly that $\phi=|a_1a_2\rangle$ solves the massless Klein--Gordon equation $\p_c\p^c \phi =0$, for any $a_i\in\{1,2,\ldots,n\}$:
\begin{align*}
\p_c\p^c(|a_1a_2\rangle)&=\p_c(\eta^{ca_1}x^{a_2}+x^{a_1}\eta^{ca_2}-\frac{1}{n}(\delta_b^cx^b+x_b\eta^{cb})\eta^{a_1a_2} \\ 
&=\eta^{ca}\delta_c^{a_2}+\delta_c^{a_1}\eta^{ca_2}-\frac{1}{n}\delta_b^c\delta_c^b\eta^{a_1a_2}-\frac{1}{2}\eta_{cb}\eta^{cb}\eta^{a_1a_2}=0,
\end{align*}
(using $\eta_{ab}\eta^{ab}=\delta_a^a=n$).
\end{Example}

\begin{Example}
Suppose $\ell=3$. Then, in \eqref{eq:explicit-formula}, only the $k=0$ and $k=1$ terms contribute, since $3-2k<0$ for $k\ge 2$ and the degree of an $\eta x$-monomial is always non-negative. For $k=0$, the only degree $3$ $\eta x$-monomial indexed by (a permutation of) $a=(a_1,a_2,a_3)$ is $x^{a_1}x^{a_2}x^{a_3}$. For $k=1$, the sum of all degree $\ell-2k=3-2=1$ $\eta x$-monomials indexed by a permutation of $a$ is (cf. \eqref{eq:l3d1-eta-x-partial-monomial-sum}) $\eta^{a_1a_2}x^{a_3}+\eta^{a_1a_3}x^{a_2}+\eta^{a_2a_3}x^{a_1}$. Thus, for $\ell=3$, \eqref{eq:explicit-formula} states that
\begin{equation}
|a_1a_2a_3\rangle=x^{a_1}x^{a_2}x^{a_3}+\frac{1}{-1-\frac{n}{2}}\cdot\frac{1}{2}x_bx^b \cdot(\eta^{a_1a_2}x^{a_3}+\eta^{a_1a_3}x^{a_2}+\eta^{a_2a_3}x^{a_1}).
\end{equation}
\end{Example}

\begin{Example}
We give the explicit formula for $\ell=4$, in which we get contributions from $k=0,1,2$:
\begin{align}
|a_1a_2a_3a_4\rangle &= x^{a_1}x^{a_2}x^{a_3}x^{a_4} \nonumber \\
&+ \frac{1}{-2-\frac{n}{2}}\cdot\frac{1}{2}x_bx^b\cdot (\eta^{a_1a_2}x^{a_3}x^{a_4}+\eta^{a_1a_3}x^{a_2}x^{a_4}+\eta^{a_1a_4}x^{a_2}x^{a_3} \nonumber \\
&\qquad\qquad\qquad\qquad+\eta^{a_2a_3}x^{a_1}x^{a_4}+\eta^{a_2a_4}x^{a_1}x^{a_3}+\eta^{a_3a_4}x^{a_1}x^{a_2}) \nonumber \\
&+\frac{1}{(-2-\frac{n}{2})(-1-\frac{n}{2})}\cdot (\frac{1}{2}x_bx^b)^2.
\end{align}
We remind the reader that $x_bx^b=\sum_{a,b=1}^n \eta_{ab}x^a x^b$ is the ``radius squared''. The point of this formula is that for any choice $a_1,a_2,a_3,a_4\in\{1,2,\ldots,n\}$, the degree 4 polynomial $|a_1a_2a_3a_4\rangle$ solves the Klein--Gordon equation. Furthermore, as we will see in the next section, any solution is a linear combination of these solutions. These results are valid in any spacetime dimension $n\ge 3$.
The metric $\eta_{ab}$ can be any flat metric, and $\eta^{ab}$ denote the entries of the inverse of the matrix $(\eta_{ab})$.
\end{Example}

\begin{Remark}
The states $|a_1a_2\cdots a_\ell\rangle$ are not orthogonal. In Section \ref{sec:inner-products} we show that their inner products are related to the matrix elements of symmetrizers from the dynamical R-matrix formalism.
\end{Remark}

\subsection{Irreducibility} \label{sec:irreducibility}

Suppose $n\ge 3$. Let $Z^\circ$ be the $\C$-subalgebra of $Z$ generated by the $2n+1$ generators $\tilde\p_a=\p_a+I_+$, and $\tilde x^a=P(x^a+I_+)=x^a+\frac{1}{H+2}\frac{1}{2}x_bx^b\p^a +I_+$, and $H$.
In this section we show that $Z^\circ$ acts irreducibly on the space $\mathscr{F}^+$ of polynomial solutions to $\p_a\p^a\phi=0$.
In this section we specialize $\eta_{ab}$ to an arbitrary non-degenerate symmetric bilinear form on $\C^n$. 

\begin{Lemma}\label{lem:irreducibility}
Let $B=\C[\p_1,\ldots,\p_n]$. Then any $B$-submodule of $\mathscr{F}^+$ contains the constant solution $1$.
\end{Lemma}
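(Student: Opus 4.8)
The plan is to show that any nonzero $B$-submodule $M\subseteq\mathscr F^+$, where $B=\C[\p_1,\dots,\p_n]$ acts by differentiation, must contain the constant polynomial $1$. The key observation is that $B$ acts on the polynomial ring $\mathscr F$ by lowering the degree, and that $\mathscr F^+$ is a graded subspace (since the operator $\p_a\p^a$ is homogeneous of degree $-2$, its kernel is graded). So pick any nonzero $\phi\in M$; decomposing into homogeneous components and using that $M$ is a $B$-submodule (hence closed under the degree-preserving projections realized via Euler-type operators, or simply noting one can isolate a single homogeneous component by a standard argument), we may assume $\phi$ is homogeneous of some degree $d\ge 0$. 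If $d=0$ then $\phi$ is a nonzero scalar multiple of $1$ and we are done. If $d\ge 1$, I would apply a suitable first-order operator $\p_a\in B$ to strictly lower the degree while staying nonzero.

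First I would make precise why some $\p_a\phi\neq 0$ when $\phi$ is homogeneous of degree $d\ge1$: if every $\p_a\phi=0$ for $a=1,\dots,n$, then $\phi$ is constant, contradicting $d\ge1$. Hence there is an index $a$ with $\p_a\phi\neq 0$, and $\p_a\phi\in M$ (since $M$ is a $B$-submodule and $\p_a\in B$), and it is homogeneous of degree $d-1$; moreover $\p_a\phi$ still lies in $\mathscr F^+$ because $\p_a$ commutes with $\p_b\p^b=\p_c\p^c$ in the Weyl algebra, so $\p_c\p^c(\p_a\phi)=\p_a(\p_c\p^c\phi)=0$. Iterating this descent $d$ times produces a nonzero homogeneous element of degree $0$ in $M$, that is, a nonzero scalar, so $1\in M$.

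The one genuine subtlety — and the step I expect to be the main point requiring care — is the reduction to the homogeneous case. The cleanest route is to note that $\mathscr F=\bigoplus_{d\ge0}\mathscr F_d$ with each $\mathscr F_d$ finite-dimensional, that $\mathscr F^+=\bigoplus_{d\ge0}\mathscr F^+_d$ with $\mathscr F^+_d=\mathscr F^+\cap\mathscr F_d$, and that for a nonzero $\phi\in M$ with top homogeneous component $\phi_d$ of degree $d$, one recovers $\phi_d$ inside $M$ as follows: the graded components of $\phi$ of degrees $<d$ can be killed by applying enough derivatives — more precisely, $\p_{a_1}\cdots\p_{a_d}\phi$ equals $\p_{a_1}\cdots\p_{a_d}\phi_d\in M$ and is a constant; if for some multi-index this constant is nonzero we are already done, and if $\p_{a_1}\cdots\p_{a_d}\phi_d=0$ for \emph{all} choices then $\phi_d$ (a degree-$d$ polynomial all of whose order-$d$ derivatives vanish) is zero, contradicting that it is the top component. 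Thus in all cases $1\in M$. I would remark that this argument uses $B=\C[\p_1,\dots,\p_n]$ rather than, say, the subalgebra generated by the $\p^a$, but since the metric $(\eta_{ab})$ is nondegenerate the $\p^a$ and the $\p_a$ span the same space of operators, so the statement is unaffected.
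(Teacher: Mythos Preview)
Your argument is correct and coincides with the paper's: both reduce to a homogeneous (or top-degree) component and then apply a product $\p_1^{d_1}\cdots\p_n^{d_n}$ matching a nonzero monomial of $\phi$ to extract a nonzero scalar in the submodule. Your handling of the homogeneity reduction---observing directly that order-$d$ derivatives of $\phi$ see only its degree-$d$ part---is slightly more explicit than the paper's appeal to the semisimplicity of $H$ (which, strictly speaking, is not an element of $B$), but the core idea is identical.
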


\begin{proof}
The argument is analogous to the proof of non-degeneracy of the pairing $S(V)\otimes S(V^\ast)\to\C$ of constant coefficient differential operators $D\in S(V)$ and polynomials $p\in S(V^\ast)$, $D\otimes p\mapsto \big(D(p)\big)(0)$, see e.g. \cite{chrissRepresentationTheoryComplex2010}. We provide details for convenience.
Let $\phi\in V^+$ be an arbitrary nonzero element. We wish to show that the cyclic $B$-submodule generated by $\phi$ contains $1$. Since $H$ acts semisimply on $V^+$, we may without loss of generality assume that $\phi$ is a homogeneous polynomial. Let $d\in\Z_{\ge 0}$ be its degree. Pick one of the nonzero terms in $\phi$, say $\xi (x^1)^{d_1}(x^2)^{d_2}\cdots (x^n)^{d_n}$ where $d_i$ are non-negative integers that sum to $d$, and $\xi$ is a nonzero complex number. Then it is easy to see that 
\[(\p_1)^{d_1}(\p_2)^{d_2}\cdots (\p_n)^{d_n}(\phi) = d_1!d_2!\cdots d_n! \xi\]
which is a nonzero scalar.
Indeed, in any other term in $\phi$ one of the variables must occur in lower degree and is therefore be annihilated by the differential operator. Since $\p_a\in B$, this shows that $1$ belongs to the cyclic $B$-submodule generated by $\phi$.
\end{proof}

The following result states a connection between the generators of the localized reduction algebra and solutions to the KG equation.

\begin{Lemma} \label{lem:irreducibility2}
Let $\phi=\sum c_a x^{a_1}\cdots x^{a_\ell}$ be any homogeneous solution to the KG equation of degree $\ell$. Here $c_a\in \C$ and we sum over all sequences $a=(a_1,a_2,\ldots,a_\ell)\in \{1,2,\ldots,n\}^\ell$.
Consider $\tilde\phi$ obtained from $\phi$ by replacing each $x^{a_i}$ by $\tilde x^{a_i}=P(x^{a_i}+I)$. Thus $\tilde\phi$ is an element of the reduction algebra. Assuming $n\ge 3$, we can act on the constant solution $1$. The result is that it reproduces the solution $\phi$:
\begin{equation}
\tilde\phi \cdot 1 = \phi.
\end{equation}
\end{Lemma}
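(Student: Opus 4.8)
The plan is to reduce the claim to the previously established identity $P(x^{a_1}+I)\cdots P(x^{a_\ell}+I)=P(x^{a_1}\cdots x^{a_\ell}+I)$ together with the fact that the extremal projector is (essentially) the identity when acting on $\mathscr{F}^+$. First I would unpack the definition of $\tilde\phi$: by $\C$-linearity it suffices to treat a single monomial $x^{a_1}\cdots x^{a_\ell}$, since both the substitution $x^{a_i}\mapsto\tilde x^{a_i}$ and the action on $1$ are linear in the coefficients $c_a$. So the task becomes showing $\big(\tilde x^{a_1}\tilde x^{a_2}\cdots\tilde x^{a_\ell}\big)\cdot 1 = x^{a_1}x^{a_2}\cdots x^{a_\ell}$ \emph{modulo the span of the other monomials}, i.e. that after summing over all $a$ with the coefficients $c_a$ the corrective terms cancel because $\phi$ solves the KG equation.

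The key step is the following. By the Lemma preceding the Definition of $|a_1\cdots a_\ell\rangle$, we have $\tilde x^{a_1}\cdots\tilde x^{a_\ell}\cdot 1 = P(x^{a_1}\cdots x^{a_\ell}+I)\cdot 1 = |a_1\cdots a_\ell\rangle$. Hence $\tilde\phi\cdot 1 = \sum_a c_a |a_1\cdots a_\ell\rangle$. Now I would invoke the explicit formula of Theorem \ref{thm:explicit-solutions}: each $|a_1\cdots a_\ell\rangle$ equals $x^{a_1}\cdots x^{a_\ell}$ plus a sum of terms of the form (scalar)$\cdot(\tfrac12 x_bx^b)^k\cdot(\text{ordered }\eta x\text{-monomials})$ with $k\ge 1$. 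Summing against $c_a$, the leading terms reconstitute $\phi$ exactly, so it remains to show $\sum_a c_a(\text{correction terms})=0$. The correction terms are built from contractions $\eta^{a_i a_j}$ of pairs of indices of $\phi$; a single such contraction $\eta^{a_ia_j}c_a x^{a_1}\cdots\widehat{x^{a_i}}\cdots\widehat{x^{a_j}}\cdots x^{a_\ell}$, symmetrized over the positions, is proportional to applying $\p_c\p^c=\square$ to $\phi$ (up to combinatorial factors counting the pairs), which vanishes since $\square\phi=0$. Iterating, every $k\ge 1$ correction term contains at least one factor of $\square\phi$ (or a $\square$ applied to an already-contracted polynomial, which is again built from $\square\phi$ by the Leibniz rule) and hence vanishes.

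A cleaner route, which I would prefer to present, bypasses the explicit formula: work directly in $A/I$. Write $P(x^{a_1}\cdots x^{a_\ell}+I)=\sum_{k\ge0} f_k(H)(\tfrac12 x_bx^b)^k\,S_k^{a}+I$ where $S_k^a$ is the sum of all degree $\ell-2k$ mixed $x\p$-monomials indexed by $a$ (as in the proof of Theorem \ref{thm:explicit-solutions}). When we sum over $a$ against $c_a$ and then act on $1$, the $k=0$ term gives $\phi$, and for $k\ge1$ the operator $S_k^a$, summed against $c_a$, is (up to a positive combinatorial constant depending only on $k$ and $\ell$) equal to $\square^k$ applied to $\phi$ — because $S_k^a$ is a symmetrization of $k$ derivatives distributed among the $\ell$ slots, and $\sum_a c_a(\partial\text{'s in }k\text{ slots, }x\text{'s elsewhere})$ is exactly $\square^k\phi$ by the Leibniz rule and the symmetry of $c_a$ under permuting its indices (which we may assume, or handle by symmetrizing). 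Since $\square\phi=0$ this kills all $k\ge1$ contributions, leaving $\tilde\phi\cdot1=\phi$.

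The main obstacle is the bookkeeping in the last step: making precise the claim that $\sum_a c_a S_k^a$ acting on $1$ reproduces a nonzero multiple of $\square^k\phi$, i.e. correctly identifying the combinatorial constant and verifying that the $x_bx^b$ prefactors and the Leibniz-rule cross terms do not interfere. One must be careful that $S_k^a$ is defined with $\p^{a_i}$ (raised index) in the chosen slots, so the contraction that appears is exactly $\eta^{a_ia_j}$, matching $\square=\p_c\p^c$; and that acting with $(\tfrac12 x_bx^b)^k f_k(H)$ on the left of $S_k^a\cdot1$ is harmless once $S_k^a\cdot1$ is seen to vanish. Modulo this combinatorial identity — which is essentially the content of Lemma \ref{lem:monomial-sums} read backwards — the proof is short.
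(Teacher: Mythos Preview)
Your argument is correct, but the paper's proof takes a much shorter structural route that sidesteps the combinatorics entirely. The paper simply observes that every correction term in the extremal projector carries a factor of $F\propto x_bx^b$ on the left, so that $\tilde\phi\cdot 1=\phi+x_bx^b\,\phi'$ for some homogeneous polynomial $\phi'$ of degree $\ell-2$. Since both $\tilde\phi\cdot 1$ (by construction) and $\phi$ (by hypothesis) lie in $\mathscr{F}^+$, so does their difference $x_bx^b\,\phi'$. The extremal projector is the identity on $\mathscr{F}^+$, hence $x_bx^b\,\phi'=P(x_bx^b\,\phi')\propto PF\phi'=0$ by the defining property $PF=0$. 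As $x_bx^b$ is not a zero-divisor, $\phi'=0$.

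The contrast is that you kill each $k\ge 1$ correction term \emph{individually} by identifying it (after summing against $c_a$) as a multiple of $\square^k\phi$, while the paper bundles all corrections into the single expression $x_bx^b\,\phi'$ and annihilates it in one stroke via $PF=0$, never needing to identify $\phi'$. Your route has the virtue of being fully explicit---one sees directly that the hypothesis $\square\phi=0$ is exactly what makes the first correction vanish---and the combinatorial identity you flag (that the symmetrized $\eta$-contractions reproduce $\square^k\phi$ up to a constant) does hold once $c_a$ is symmetrized. The paper's route is cleaner, avoids Theorem~\ref{thm:explicit-solutions} and Lemma~\ref{lem:monomial-sums} as inputs, and would generalize unchanged to settings where the projector has more complicated correction terms.
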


\begin{proof}
The left hand side has the form $\phi + x_bx^b \phi'$ for some degree $\ell-2$ homogeneous polynomial $\phi'$. We claim that $\phi'=0$. To see this, note that since we know the left hand side and $\phi$ are solutions to the KG equation, which is a linear PDE, we also obtain that $x_bx^b\phi'$ solves the KG equation. But the extremal projector is the identity on the solution space. So
\[x_bx^b\phi' = P(x_bx^b \phi') \propto PF \phi'=0.\]
where we used that $x_bx^b$ is proportional to $F$, and that $PF=0$ by definition of the extremal projector for $\Fsl_2$. Since $x_bx^b$ is not a zero-divisor, $\phi'=0$.
\end{proof}

\begin{Theorem}\label{thm:irreducibility}
Let $n$ be a positive integer with $n\neq 3$. Let $\eta_{ab}$ be the entries of a non-degenerate symmetric complex $n\times n$-matrix, and let $\eta^{ab}$ be the entries of its inverse. Let $\mathscr{F}=\C[x^1,x^2,\ldots,x^n]$ the polynomial algebra in $n$ variables over $\C$. Let $\mathscr{F}^+=\{\phi\in \mathscr{F}\mid \sum_{a,b}\eta^{ab}\p_a\p_b\phi=0\}$.
Let $Z^\circ$ be the $\C$-subalgebra of generated by the operators
\[H=-\frac{1}{2}\sum_a (x^a\partial_a+\partial_ax^a),\qquad \p_a,\qquad \tilde x^a=x^a+\frac{1}{2H+4}\sum_{b,c}\eta_{bc}x^bx^c\p^a,\quad 1\le a\le n.\]
Then $\mathscr{F}^+$ carries a well-defined action of $Z^\circ$. Moreover, $\mathscr{F}^+$ is irreducible as a module over $Z^\circ$.
\end{Theorem}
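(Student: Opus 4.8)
\emph{Plan.} I would obtain the theorem as a short formal consequence of the two preparatory lemmas, Lemma~\ref{lem:irreducibility} and Lemma~\ref{lem:irreducibility2}, after first dealing with the assertion that $\mathscr{F}^+$ carries a well-defined $Z^\circ$-action. For the latter, recall that $\tilde x^a=P(x^a+I_+)$, $\p_a+I_+$ and $H$ all lie in the reduction algebra $Z=Z(W'_\eta(2n),\Fsl_2)$ (the element $\p_a$ normalizes $I_+=W'_\eta(2n)E$ because $E=\frac{i}{2}\p_c\p^c$ commutes with it; $\tilde x^a$ lies in the normalizer by the defining property of the extremal projector), so $Z^\circ$ is honestly a subalgebra of $Z$. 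What must be checked is that the $H$-denominators carried by $\tilde x^a$ and by its products are invertible on the subspaces of $\mathscr{F}$ on which they get evaluated: since $H$ acts on the degree-$\ell$ component $\mathscr{F}_\ell$ by the scalar $-\ell-\frac{n}{2}$, the term $\frac{1}{2H+4}\eta_{bc}x^bx^c\p^a$ in $\tilde x^a$ is applied either to a constant solution (which it annihilates) or to a homogeneous component of degree $\ell+1\ge 2$, on which $2H+4$ is the nonzero scalar $2-2\ell-n$; and $\tilde x^{a_1}\cdots\tilde x^{a_\ell}\cdot 1=|a_1\cdots a_\ell\rangle$ is, by Theorem~\ref{thm:explicit-solutions}, an honest polynomial under the standing hypothesis on $n$. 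Granting this, $\mathscr{F}^+$ is a well-defined $Z^\circ$-module.

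\emph{Irreducibility.} Let $M\subseteq\mathscr{F}^+$ be a nonzero $Z^\circ$-submodule; the goal is $M=\mathscr{F}^+$. Since each $\p_a$ lies in $Z^\circ$, the submodule $M$ is in particular a nonzero module over $B=\C[\p_1,\ldots,\p_n]$, so Lemma~\ref{lem:irreducibility} gives $1\in M$. Now $H\in Z^\circ$ acts semisimply on $\mathscr{F}^+$ with eigenspaces the homogeneous components $\mathscr{F}^+_\ell$ (eigenvalue $-\ell-\frac{n}{2}$; distinct $\ell$ yield distinct eigenvalues), and $M$ is $H$-stable, so $M=\bigoplus_\ell(M\cap\mathscr{F}^+_\ell)$ and it is enough to prove $\mathscr{F}^+_\ell\subseteq M$ for every $\ell$. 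Fix a homogeneous solution $\phi$ of degree $\ell$ and write $\phi=\sum_a c_a\,x^{a_1}\cdots x^{a_\ell}$ with $c_a\in\C$. Forming $\tilde\phi=\sum_a c_a\,\tilde x^{a_1}\cdots\tilde x^{a_\ell}$, an element of $Z^\circ$, Lemma~\ref{lem:irreducibility2} gives $\tilde\phi\cdot 1=\phi$; since $1\in M$ we conclude $\phi=\tilde\phi\cdot 1\in M$. Hence every $\mathscr{F}^+_\ell$ lies in $M$, so $M=\mathscr{F}^+$ and $\mathscr{F}^+$ is irreducible over $Z^\circ$.

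\emph{Where the difficulty lies.} Given the two lemmas, the argument above is essentially bookkeeping; the real content sits in Lemma~\ref{lem:irreducibility2}, whose proof rests on the identity $PF=0$ for the extremal projector together with the fact that $\mathscr{F}^+$ is exactly the range of $P$ (so that a solution reconstructed from the raising operators acquires no ``$x_bx^b$-tail'') and on $x_bx^b$ being a non-zero-divisor in $\mathscr{F}$. At the technical level, the step I expect to be the most delicate is the well-definedness claim: one has to verify that none of the $H$-shifted denominators appearing in $\tilde x^a$, or produced when multiplying several of them together, ever vanishes on a homogeneous component of $\mathscr{F}$ that genuinely arises. This is the same no-pole phenomenon that makes Theorem~\ref{thm:explicit-solutions} meaningful, and it is exactly the point at which the hypothesis on the spacetime dimension $n$ enters, inherited from Lemma~\ref{lem:irreducibility2}.
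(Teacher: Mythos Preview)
Your proposal is correct and follows essentially the same approach as the paper: the paper's proof consists of the single sentence ``This is immediate by Lemmas~\ref{lem:irreducibility} and~\ref{lem:irreducibility2}'' together with a remark that $2H+4$ acts on a degree $d$ polynomial by the nonzero scalar $-2d-n+4$, which is exactly the well-definedness check you carry out. Your write-up simply spells out in more detail how the two lemmas combine (reach $1$ via the $\p_a$'s, then rebuild any homogeneous $\phi$ as $\tilde\phi\cdot 1$), and your identification of the dimension hypothesis as entering through the invertibility of the $H$-shifted denominators matches the paper's remark.
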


\begin{proof}
This is immediate by Lemmas \ref{lem:irreducibility} and \ref{lem:irreducibility2}. We only remark that $2H+4=-2Eu-n+4$ where $Eu$ is the Euler operator. The Euler operator acts on a degree $d$ polynomial with eigen value $d$. Therefore $2H+4$ acts on a degree $d$ polynomial with eigenvalue $-2d-n+4\ge -2d-1<0$ since $n\ge 3$. This shows that $2H+4$ is an invertible operator when acting on polynomials.
\end{proof}

\subsection{Correlation Functions and Dynamical Symmetrizers}\label{sec:inner-products}

The exchange construction \cite{etingof1999exchange} provides a connection between dynamical R-matrices and representation theory of Lie algebras. This connection extends to reduction algebras in the sense that the extremal projector that provides the multiplication in the double coset realization can be identified with the universal solution to the ABRR equation \cite{arnaudon1997universal} for dynamical twist \cite{Kho2004}. This explains the appearance of dynamical R-matrices in \cite{khoroshkinDiagonalReductionAlgebras2010,
herlemontDifferentialCalculusHDeformed2017,
herlemontDifferentialCalculusMathbf2017,
herlemontRingsHdeformedDifferential2017}.

In this section we note (see Theorem \ref{thm:inner-product}) that correlation functions (inner products) between bosonic states in the universal Verma module for the deformed algebra of differential operators acting on Klein-Gordon fields can be expressed as the symmetrizer for the dynamical R-matrix.

Fix $n\ge 3$. We work in the double coset space $\bar A=A/\II$ but drop the bars over generators and write the diamond product as juxtaposition. Thus, the most important relation \eqref{eq:dynamical-weyl-rel} is written
\begin{equation}
    \p_a x^b = \delta_a^b + x^b\p_a+\frac{1}{H+1}x_a\p^b
\end{equation}
Remembering that $x_a=\eta_{ac}x^c$ and $\p^b=\eta^{bd}\p_d$ and introducing Kronecker deltas, we rewrite this equation as follows:
\begin{equation}\label{eq:rel-rewritten}
    \p_a x^b = \delta_a^b + (\delta_c^b\delta_a^d+\eta_{ac}\eta^{bd}\frac{1}{H+1})x^c\p_d
\end{equation}
Using the rational dynamical $R$-matrix (composed with flip)
\begin{equation}
    R_{ac}^{bd}(H)=\delta_c^b\delta_a^d+\eta_{ac}\eta^{bd}\frac{1}{H+1}
\end{equation}
we can write \eqref{eq:rel-rewritten} as
\begin{equation}\label{eq:rel-rewritten-with-R}
    \p_a x^b = \delta_a^b + R_{ac}^{bd} x^c\p_d.
\end{equation}
We consider a generalization of the natural module $\mathscr{F}^+$ which admits a representation of the localized algebra. It can also be viewed as a kind of universal Verma module for the reduction algebra. It is defined as the quotient of the algebra $\bar A$ by the left ideal generated by the $\p_a$'s (remember that we dropped the bars):
\begin{equation}
    \mathcal{M} = \bar A/J,\qquad J=\sum_a \bar A\p_a.
\end{equation}
The space $\mathcal{M}$ can be identified with $\mathbb{C}(H)[x^1,\ldots,x^n]$ modulo the relation $x_ax^a=0$.

There is a natural bilinear form on $\mathcal{M}$ which is analogous to the Shapovalov form on Verma modules for simple Lie algebras:
\begin{equation}
    \langle u\,|\,v\rangle = (u^\ast v)(1)|_{x=0},\qquad u,v\in\mathcal{M}.
\end{equation}
Here we use $(x^a)^\ast = \p_a$ so that $u^\ast$ is a constant-coefficient differential operator. Thus $u^\ast v$ is a differential operator and we act by it on the constant function $1$. Equivalently, $(u^\ast v)(1)=u^\ast(v)$. Lastly, the resulting polynomial is evaluated at the origin by setting all variables $x^a$ to zero. The result is a dynamical scalar, that is, an element of $\mathbb{C}(H)$.
For example,
\[\langle x^a\,|\,x^b\rangle = (\p_a x^b)(1)|_{x=0}=(\delta_a^b+R_{ac}^{bd}(H)x^c\p_d)(1)|_{x=0}=\delta_a^b\]
It is also easy to see that 
\[\langle x^{a_1}\cdots x^{a_r}\,|\,x^{b_1}\cdots x^{b_s}\rangle = 0\qquad\text{if $r\neq s$,}\]
because if $u^\ast v$ does not have degree zero, then $(u^\ast v)(1)|_{x=0}=0$.

To compute the bilinear form on quadratic monomials
\[\langle x^{a_1}x^{a_2}\,|\,x^{b_1}x^{b_2}\rangle  \]
we first compute in the algebra $Z'$:
\begin{align*}
\p_{a_1} x^{b_1}x^{b_2} &= (\delta_{a_1}^{b_1}+R_{a_1 r}^{b_1 s}(H) x^r\p_s)x^{b_2}\\
&=\delta_{a_1}^{b_1}x^{b_2} + R_{a_1r}^{b_1r}(H)x^r(\delta_s^{b_2}+R_{st}^{b_2u}x^t\p_u)\\
&=\delta_{a_1}^{b_1}x^{b_2}+R_{a_1r}^{b_1b_2}(H)x^r + R_{a_1r}^{b_1s}(H)R_{st}^{b_2u}(H+1)x^rx^t\p_u.
\end{align*}
Here we used that $x^r H=(H+1)x^r$. Using this we have
\begin{align*}
\p_{a_2}\p_{a_1}x^{b_1}x^{b_2}&=\p_{a_2}\big(\delta_{a_1}^{b_1}x^{b_2}+R_{a_1r}^{b_1b_2}(H)x^r+R_{a_1r}^{b_1s}(H)R_{st}^{b_2u}(H+1)x^rx^t\p_u\big)\\
&=\delta_{a_1}^{b_1}(\delta_{a_2}^{b_2}+R_{a_2 \al}^{b_2\beta}(H)x^\al\p_\be)
+R_{a_1r}^{b_1b_2}(H-1)(\delta_{a_2}^r+R_{a_2\al}^{r\be}(H)x^\al\p_\be)+\cdots 
\end{align*}
where $\cdots$ means terms that disappear in the bilinear form. Thus we have:
\begin{equation}
\langle x^{a_1}x^{a_2}\,|\,x^{b_1}x^{b_2}\rangle = \delta_{a_1}^{b_1}\delta_{a_2}^{b_2}+ R_{a_1a_2}^{b_1b_2}(H-1).
\end{equation}

For the general formula we need a lemma.

\begin{Lemma}\label{lem:bilinear}
For any positive integer $r$ we have:
\begin{equation}\label{eq:one-partial-many-x}
\big(\p_{a_1}x^{b_1}x^{b_2}\cdots x^{b_r}\big)(1)=S_{a_1c_2\cdots c_r}^{b_1b_2\cdots b_r}(H+r-1) x^{c_2}x^{c_3}\cdots x^{c_r},
\end{equation}
where $S_a^b(H)=\delta_a^b$, and recursively
\begin{equation}\label{eq:S-recursion}
S_{a_1\cdots a_r}^{b_1\cdots b_r}(H+r-1) = 
\delta_{a_1}^{b_1}\delta_{a_2}^{b_2}\cdots\delta_{a_r}^{b_r} + R_{a_1a_2}^{b_1t}(H)S_{ta_3\cdots a_r}^{b_2b_3\cdots b_r}(H+r-1).
\end{equation}
Explicitly, $S_{a_1a_2}^{b_1b_2}(H)=\delta_{a_1}^{b_1}\delta_{a_2}^{b_2}+R_{a_1a_2}^{b_1b_2}(H-1)$ and for general $r$,
\begin{equation}\label{eq:S-definition}
\begin{aligned}
S_{a_1 a_2\cdots a_r}^{b_1b_2\cdots b_r}(H)&=\delta_{a_1}^{b_1}\delta_{a_2}^{b_2}\cdots\delta_{a_r}^{b_r}\\
&\quad+R_{a_1a_2}^{b_1c_2}(H-r+1)\delta_{c_2}^{b_2}\delta_{a_3}^{b_3}\delta_{a_4}^{b_4}\cdots \delta_{a_r}^{b_r} \\
&\quad + R_{a_1a_2}^{b_1c_2}(H-r+1)R_{c_2a_3}^{b_2c_3}(H-r+2) \delta_{c_3}^{b_3}\delta_{a_4}^{b_4}\cdots \delta_{a_r}^{b_r} \\
&\quad +\cdots \\
&\quad+R_{a_1a_2}^{b_1 c_2}(H-r+1)\cdots R_{c_{r-1}a_r}^{b_{r-1}b_r}(H-1).
\end{aligned}
\end{equation}
\end{Lemma}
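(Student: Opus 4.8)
The plan is to prove \eqref{eq:one-partial-many-x} by induction on $r$, treating the recursion \eqref{eq:S-recursion} together with $S_a^b(H)=\delta_a^b$ as the \emph{definition} of the tensors $S$, and then reading off the closed form \eqref{eq:S-definition} afterwards by unwinding that recursion. To keep the $H$-shifts transparent I would set $T_{a_1\cdots a_r}^{b_1\cdots b_r}(H):=S_{a_1\cdots a_r}^{b_1\cdots b_r}(H+r-1)$, so that \eqref{eq:S-recursion} reads $T_{a_1\cdots a_r}^{b_1\cdots b_r}(H)=\delta_{a_1}^{b_1}\cdots\delta_{a_r}^{b_r}+R_{a_1a_2}^{b_1t}(H)\,T_{ta_3\cdots a_r}^{b_2\cdots b_r}(H+1)$ and the assertion \eqref{eq:one-partial-many-x} becomes the statement that $(\p_{a_1}x^{b_1}\cdots x^{b_r})(1)=T_{a_1c_2\cdots c_r}^{b_1\cdots b_r}(H)\,x^{c_2}\cdots x^{c_r}$ in $\mathcal{M}$. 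The base case $r=1$ is immediate from \eqref{eq:rel-rewritten-with-R}: writing $\p_{a_1}x^{b_1}=\delta_{a_1}^{b_1}+R_{a_1c}^{b_1d}(H)x^c\p_d$ and acting on $1$, the second term vanishes because $\p_d\in J$, leaving $\delta_{a_1}^{b_1}=T_{a_1}^{b_1}(H)$.

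For the inductive step I would first use \eqref{eq:rel-rewritten-with-R} to detach the leftmost variable, $\p_{a_1}x^{b_1}x^{b_2}\cdots x^{b_r}=\delta_{a_1}^{b_1}x^{b_2}\cdots x^{b_r}+R_{a_1c}^{b_1d}(H)\,x^c\,\p_d x^{b_2}\cdots x^{b_r}$, and then act on $1$. Since $J$ is a left ideal one may apply the induction hypothesis to the $(r-1)$-fold product, $(\p_d x^{b_2}\cdots x^{b_r})(1)=T_{dc_3\cdots c_r}^{b_2\cdots b_r}(H)\,x^{c_3}\cdots x^{c_r}$, and then multiply this identity on the left by $x^c$ and by $R_{a_1c}^{b_1d}(H)$ while staying inside $\mathcal{M}$. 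The one genuine computation is to push the rational function $T_{dc_3\cdots c_r}^{b_2\cdots b_r}(H)$ leftward past the single variable $x^c$; by \eqref{eq:xH=(H+1)x}, i.e.\ $x^af(H)=f(H+1)x^a$, this shifts its argument to $H+1$, and since $R_{a_1c}^{b_1d}(H)$ and $T_{d\cdots}^{b_2\cdots}(H+1)$ are both rational in $H$ they commute. Relabelling $c\to c_2$ and recombining $\delta_{a_1}^{b_1}x^{b_2}\cdots x^{b_r}=\delta_{a_1}^{b_1}\delta_{c_2}^{b_2}\cdots\delta_{c_r}^{b_r}x^{c_2}\cdots x^{c_r}$, one lands on $\big(\delta_{a_1}^{b_1}\delta_{c_2}^{b_2}\cdots\delta_{c_r}^{b_r}+R_{a_1c_2}^{b_1d}(H)\,T_{dc_3\cdots c_r}^{b_2\cdots b_r}(H+1)\big)x^{c_2}\cdots x^{c_r}$, which is exactly $T_{a_1c_2\cdots c_r}^{b_1\cdots b_r}(H)\,x^{c_2}\cdots x^{c_r}$ by the recursion. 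This closes the induction.

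The closed form \eqref{eq:S-definition} is then obtained by iterating \eqref{eq:S-recursion}: each application trades an $S$ with $k$ index pairs for $R$ times an $S$ with $k-1$ index pairs, and matching the $(H+r-1)$-normalizations forces each newly produced $R$-factor to carry an argument one larger than the previous one, yielding the chain $H-r+1,\,H-r+2,\,\ldots,\,H-1$; I would display only the first two or three steps, the remainder being identical bookkeeping. The main obstacle here is not conceptual but precisely this bookkeeping of $H$-shifts — one must keep straight that pushing an $x$ (respectively a $\p$) past a function of $H$ raises (respectively lowers) its argument by one, and that the shift appearing in \eqref{eq:S-recursion} is exactly the one generated by the step ``$x^c$ past $T(H)$''. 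A minor point worth a sentence is that every manipulation descends to $\mathcal{M}=\bar A/J$, for which it is enough that $J$ is a left ideal and that $\p_d\in J$.
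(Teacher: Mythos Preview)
Your proof is correct and follows essentially the same route as the paper: induction on $r$, base case from \eqref{eq:rel-rewritten-with-R}, and the inductive step by peeling off $x^{b_1}$, invoking the hypothesis on the shorter product, and commuting one $x^{c_2}$ past the resulting function of $H$ via \eqref{eq:xH=(H+1)x}. Your auxiliary notation $T(H)=S(H+r-1)$ is a harmless bookkeeping device, and your explicit unwinding of the recursion to obtain \eqref{eq:S-definition} fills in a step the paper leaves to the reader.
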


\begin{proof}
Induction on $r$. For $r=1$ \eqref{eq:one-partial-many-x} is immediate by \eqref{eq:rel-rewritten-with-R}. For $r>1$ we move $x^{b_1}$ to the left using \eqref{eq:rel-rewritten-with-R} in the second term, use that $(x^s D)(1)=x^s \big( D(1)\big)$ for any differential operator $D$, the induction hypothesis, and lastly relation \eqref{eq:xH=(H+1)x}:
\begin{align*}
\big(\p_{a_1}x^{b_1}x^{b_2}\cdots x^{b_r}\big)(1) &=
\big((\delta_{a_1}^{b_1}+R_{a_1 s}^{b_1 t}(H)x^s\p_t)x^{b_2}\cdots x^{b_r}\big)(1) \\
&=\delta_{a_1}^{b_1}x^{b_2}\cdots x^{b_r}+ R_{a_1c_2}^{b_1t}(H)x^{c_2} \cdot \big(\p_t x^{b_2}\cdots x^{b_r}\big)(1)\\
&=\delta_{a_1}^{b_1}x^{b_2}\cdots x^{b_r}+  R_{a_1c_2}^{b_1t}(H)x^{c_2} S_{tc_3\cdots c_r}^{b_2b_3\cdots b_r}(H+r-2) x^{c_3}\cdots x^{c_r}\\
&=\big(
\delta_{a_1}^{b_1}\delta_{c_2}^{b_2}\cdots\delta_{c_r}^{b_r} + R_{a_1c_2}^{b_1t}(H)S_{tc_3\cdots c_r}^{b_2b_3\cdots b_r}(H+r-1) \big) x^{c_2}x^{c_3}\cdots x^{c_r}.
\end{align*}
Thus we are done, by the recursion \eqref{eq:S-recursion}.
\end{proof}

\begin{Theorem}\label{thm:inner-product}
Let $r$ be a non-negative integer. Then
\begin{equation}\label{eq:matrix-elements}
\langle x^{a_1}x^{a_2}\cdots x^{a_r}\,|\,x^{b_1}x^{b_2}\cdots x^{b_r}\rangle = S_{a_1 c_2\cdots c_r}^{b_1b_2\cdots b_r}(H)\cdot S_{a_2d_3\cdots d_r}^{c_2c_3\cdots c_r}(H)\cdot
S_{a_3e_4\cdots e_r}^{d_3d_4\cdots d_r}(H)\cdots S_{a_r}^{z_r}(H)
\end{equation}
where $S$ was given in \eqref{eq:S-definition}, and $z_r$ is the last index being contracted (Einstein summed) over in the product.
\end{Theorem}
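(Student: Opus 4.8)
The plan is to iterate Lemma \ref{lem:bilinear} exactly $r$ times, peeling off one $\p_{a_i}$ at a time from the left, in order to evaluate $\langle x^{a_1}\cdots x^{a_r}\,|\,x^{b_1}\cdots x^{b_r}\rangle = \big(\p_{a_1}\p_{a_2}\cdots\p_{a_r}\, x^{b_1}x^{b_2}\cdots x^{b_r}\big)(1)\big|_{x=0}$. Since $u^\ast = (x^{a_1}\cdots x^{a_r})^\ast = \p_{a_r}\cdots\p_{a_1}$, and we may apply these partials one at a time, the computation reduces to repeated use of \eqref{eq:one-partial-many-x}.

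First I would observe that by Lemma \ref{lem:bilinear} applied with $\p_{a_1}$,
\[
\big(\p_{a_1}x^{b_1}\cdots x^{b_r}\big)(1) = S_{a_1c_2\cdots c_r}^{b_1b_2\cdots b_r}(H+r-1)\, x^{c_2}x^{c_3}\cdots x^{c_r}.
\]
Now the remaining factor is $x^{c_2}\cdots x^{c_r}$, a product of $r-1$ variables, but it is multiplied by the dynamical scalar $S_{a_1c_2\cdots c_r}^{b_1\cdots b_r}(H+r-1)\in\C(H)$, which we must carry past subsequent partials. The key bookkeeping point is the shift relation \eqref{eq:xH=(H+1)x} (equivalently \eqref{eq:Hpartial=partial(H+1)}): when we next apply $\p_{a_2}$ and use Lemma \ref{lem:bilinear} on the $r-1$ factors $x^{c_2}\cdots x^{c_r}$, we must first commute $\p_{a_2}$ past the coefficient $S_{a_1\cdots}(H+r-1)$, which shifts its $H$-argument by $-1$ to $S_{a_1\cdots}(H+r-2)$, matching exactly the argument $H+(r-1)-1$ that Lemma \ref{lem:bilinear} wants to see for a product of length $r-1$. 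I would make this precise by an induction on $r$: after applying $\p_{a_1},\ldots,\p_{a_k}$, the state has the form (a product of $k$ $S$-factors, each with argument $H + (r-k)$ after all the commutations) times $x^{c_{k+1}}\cdots x^{c_r}$ with appropriate contracted indices; applying $\p_{a_{k+1}}$ and invoking Lemma \ref{lem:bilinear} with $r$ replaced by $r-k$ continues the pattern. When we finally evaluate at $x=0$, only the fully-contracted scalar survives, and since $\p_{a_r}$ acting on the single remaining $x^{z_r}$ contributes $S_{a_r}^{z_r}=\delta_{a_r}^{z_r}$ (the base case of \eqref{eq:S-recursion}), the $H$-arguments of \emph{all} the $S$-factors have been shifted down to a common value, which I claim is simply $H$ — this is the normalization implicit in the statement, and it follows because the $j$-th factor $S_{a_j\cdots}$ starts life (when $\p_{a_j}$ is applied) with argument $H + (r-j)$ from Lemma \ref{lem:bilinear}, and is then commuted past $j$ subsequent... no, past the later partials, each shift contributing $-1$, for a total down-shift of $r-j$, landing at $H$. (One should double-check the direction and count of these shifts carefully, as sign/off-by-one errors are the obvious trap.)

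The main obstacle will be precisely this shift-tracking: verifying that after all commutations every $S$-factor in \eqref{eq:matrix-elements} has the \emph{same} argument $H$, rather than a tower of shifted arguments. The cleanest way to handle it is to prove the induction with a slightly stronger hypothesis that records the current $H$-shift explicitly — e.g., show that $\big(\p_{a_1}\cdots\p_{a_k}\,x^{b_1}\cdots x^{b_r}\big)(1)$ equals a product of $k$ $S$-tensors with arguments $H{+}(r{-}1), H{+}(r{-}2),\ldots$ evaluated \emph{at the appropriate shift}, times $x$'s of total degree $r-k$ — and then let $k=r$ and commute everything to a common argument using \eqref{eq:xH=(H+1)x} repeatedly. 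A secondary, purely cosmetic obstacle is index-naming: the nested contractions $c_2\cdots c_r$, then $d_3\cdots d_r$, etc., must be threaded consistently so that the free indices at the end are exactly $b_1,\ldots,b_r$ up top and $a_1,\ldots,a_r$ down below, with $z_r$ the final summed index as in the statement; this is routine but needs care. Everything else — the base cases $r=0$ (both sides $1$), $r=1$ ($\delta_{a_1}^{b_1}$), and $r=2$ (matching the displayed $\delta_{a_1}^{b_1}\delta_{a_2}^{b_2}+R_{a_1a_2}^{b_1b_2}(H-1)$) — is an immediate consequence of Lemma \ref{lem:bilinear} and the already-computed examples.
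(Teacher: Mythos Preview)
Your proposal is correct and is essentially the same argument as the paper's. The paper phrases it as a one-step recursion, $\langle x^{a_1}\cdots x^{a_r}\,|\,x^{b_1}\cdots x^{b_r}\rangle = S_{a_1 c_2\cdots c_r}^{b_1\cdots b_r}(H)\,\langle x^{a_2}\cdots x^{a_r}\,|\,x^{c_2}\cdots x^{c_r}\rangle$, obtained by applying Lemma~\ref{lem:bilinear} once and then commuting \emph{all} of $\p_{a_r}\cdots\p_{a_2}$ past the resulting $S(H+r-1)$ via \eqref{eq:Hpartial=partial(H+1)} to land at $S(H)$; your step-by-step shift bookkeeping unwinds the same recursion and reaches the identical conclusion that every $S$-factor ends up with argument $H$.
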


\begin{proof}
The formula holds for $r=0$ by convention of empty product being $1$. For $r>0$, it suffices to prove the recursion
\begin{equation}
\langle x^{a_1}x^{a_2}\cdots x^{a_r}\,|\,x^{b_1}x^{b_2}\cdots x^{b_r}\rangle = S_{a_1 c_2\cdots c_r}^{b_1b_2\cdots b_r}(H)\langle x^{a_2}x^{a_3}\cdots x^{a_r}\,|\, x^{c_2}x^{c_3}\cdots x^{c_r}\rangle.
\end{equation}
This follows from Lemma \ref{lem:bilinear} and \eqref{eq:Hpartial=partial(H+1)}.
\end{proof}

For example,
\begin{align*}
\quad&\langle x^{a_1}x^{a_2}x^{a_3}\,|\,x^{b_1}x^{b_2}x^{b_3}\rangle =
S_{a_1 c_2 c_3}^{b_1b_2b_3}(H)\cdot S_{a_2d_3}^{c_2c_3}(H)\cdot
S_{a_3}^{d_3}(H)\\
&=\big(\delta_{a_1}^{b_1}\delta_{c_2}^{b_2}\delta_{c_3}^{b_3}+ R_{a_1c_2}^{b_1r_2}(H-2)\delta_{r_2}^{b_2}\delta_{c_3}^{b_3}+ R_{a_1c_2}^{b_1r_2}(H-2)R_{r_2c_3}^{b_2b_3}(H-1)\big)\\
&\quad\cdot
\big(\delta_{a_2}^{c_2}\delta_{d_3}^{c_3}+R_{a_2 d_3}^{c_2c_3}(H-1)\big)\cdot \delta_{a_3}^{d_3}\\
&=\big(\delta_{a_1}^{b_1}\delta_{c_2}^{b_2}\delta_{c_3}^{b_3}+ R_{a_1c_2}^{b_1b_2}(H-2)\delta_{c_3}^{b_3}+ R_{a_1c_2}^{b_1r_2}(H-2)R_{r_2c_3}^{b_2b_3}(H-1)\big)\\
&\quad\cdot
\big(\delta_{a_2}^{c_2}\delta_{a_3}^{c_3}+R_{a_2 a_3}^{c_2c_3}(H-1)\big)\\
&=\delta_{a_1}^{b_1}\delta_{a_2}^{b_2}\delta_{a_3}^{b_3}+
R_{a_1a_2}^{b_1b_2}(H-1)\delta_{a_3}^{b_3} + 
R_{a_1a_2}^{b_1r_2}(H-2)R_{r_2a_3}^{b_2b_3}(H-1)\\
&+\delta_{a_1}^{b_1}R_{a_2a_3}^{b_2b_3}(H-1)+
R_{a_1c_2}^{b_1b_2}(H-2)R_{a_2a_3}^{c_2b_3}(H-1)+
R_{a_1c_2}^{b_1r_2}(H-2)R_{r_2c_3}^{b_2b_3}(H-1)R_{a_2a_3}^{c_2c_3}(H-1).
\end{align*}

The right hand side of \eqref{eq:matrix-elements} are the matrix elements of the symmetrizer for the dynamical quantum $R$-matrix.
This provides a direct link between the massless Klein--Gordon equation and integrable face models in statistical mechanics. As mentioned, this is not unexpected, since the extremal projector is equivalent to the universal solution $\boldsymbol{J}$ to the ABRR equation for dynamical twists \cite{Kho2004}. But we have not seen this particular aspect of the connection in the literature.

\end{document}